\documentclass[review]{elsarticle}
\usepackage[utf8]{inputenc}
\usepackage{amsmath}
\usepackage{amsfonts}
\usepackage{amssymb}
\author{Youssef Rami}
\usepackage{pstricks}

\title{A new invariant that's a lower bound of LS-category}
\date{1 Décembre 2014}
\vfuzz2pt 
\newtheorem{thm}{Theorem}[subsection]
\newtheorem{cor}[thm]{Corollary}
\newtheorem{lem}[thm]{Lemma}
\newtheorem{prop}[thm]{Proposition}
\newtheorem{defn}[thm]{Definition}
\newtheorem{rem}[thm]{Remark}
\newproof{pf}{Proof}
\numberwithin{equation}{subsection}


\address{D\'epartement de Math\'ematiques \& Informatique, Universit\'e  My Ismail, B. P. 11 201 Zitoune, Mekn\`es, Morocco, Fax: (212) 5 35 53 68 08}

\ead{yousfoumadan@gmail.com, yousseframi22@yahoo.fr}

\begin{document}
\begin{abstract} 

Let $X$ be  a  simply connected CW-complex of finite type and $\mathbb{K}$ any field. 
A first known lower bound of LS-category $cat(X)$ is the Toomer invariant $e_{\mathbb{K}} (X)$ (\cite{Too}). 
In $1980$'s Félix et al. introduced the concept of {\it depth} in algebraic topology and proved  the depth theorem: $depth (H_*(\Omega X, \mathbb{K})) \leq cat(X)$.  

 In this paper, we use  the Eilenberg-Moore spectral sequence of $X$
to introduce a new numerical invariant, denoted by $\textsc{r}(X, \mathbb{K})$, and show that it  has the same properties  as  those of $e_{\mathbb{K}} (X)$.
  When  the evaluation map (\cite{FHT88}) is non-trivial and $char(\mathbb{K})\not = 2$, we prove that $\textsc{r}(X, \mathbb{K})$   interpolates  $depth(H_*(\Omega X, \mathbb{K}))$ and  $e_{\mathbb{K}} (X)$. Hence, we obtain  an improvement of L. Bisiaux theorem (\cite{Bis99}) and then of the depth theorem.
  
   Motivated by these results, we  associate to any commutative differential  graded algebra  $(A,d)$,  a purely algebraic invariant $\textsc{r}(A,d)$
and, via the  theory of minimal models, we relate it with our previous topological results.
 In particular, if $(\Lambda V,d)$  is  a Sullivan minimal  algebra such that  $d=\sum_{i\geq k}d_i$ and $d_i(V)\subseteq \Lambda ^iV$, a greater lower bound is obtained, namely  $e_0(\Lambda V, d)\geq \textsc{r}(\Lambda V, d) + (k-2)$.
\end{abstract}

\begin{keyword}
Elliptic spaces; Depth; Lusternik-Schnirelman category; Toomer invariant.
\end{keyword}

\maketitle
\section{\bf Introduction}
In 1934, LS-category was introduced by  L. Lusternik and L.
Schnirelman   in connection with variational problems~\cite{LS34}.
They showed that for any closed manifold $M$, this category, denoted
$cat(M)$, is a  lower bound of a number of critical points that any
smooth function on $M$ must have. Later, it was shown that this is
also right for a manifold with a boundary ~\cite{Pal66}.
 If $X$ is a
topological space,  $cat(X)$ is the least integer $n$ such that $X$
is covered  by $n+1$ open subsets $U_i$, each contractible in $X$. This
is an invariant of homotopy type (c.f. ~\cite{Jam78} or ~\cite{FHT01}
Prop 27.2 for example). Though its definition seems easy, the
calculation of this invariant is hard to compute. In ~\cite{Too} G. H. Toomer has introduced a new invariant  $e_{\mathbb{K}}(X)$ ($\mathbb{K}$ being an arbitrary field)  that  bounds  $cat(X)$ by the lower. This one  is  difficult to define, but at first sight, it may look easier to
compute it. Later, in \cite{FHLT89}, F\'elix et al. proved  the  so called {\it the depth theorem }. which state that $depth (H_*(\Omega X, \mathbb{K})) \leq cat(X)$. Recall that 
$depth (H_*(\Omega X, \mathbb{K}))$ is the least integer $p$  such that  $ Ext^{i}_{H_*(\Omega X, \mathbb{K})}(\mathbb{K}, H_*(\Omega X, \mathbb{K})\not = 0$, $i\geq p$ or $\infty$ if $ Ext_{H_*(\Omega X, \mathbb{K})}(\mathbb{K}, H_*(\Omega X, \mathbb{K})\equiv 0$.

Recall also that a version of the Toomer invariant is given 
in terms of the Milnor-Moore spectral sequence 
\begin{eqnarray}\label{1} Ext^{p,q}_{H_*(\Omega X, \mathbb{K})}(\mathbb{K},  \mathbb{K})\Longrightarrow
H^{p+q}(X, \mathbb{K})
\end{eqnarray}
(cf. ~\cite{FH82}, Prop. 9.1) by  
$e_{\mathbb{K}}(X) = sup\{ p\in \mathbb{N} \mid E_{\infty}^{p,q} \not = 0\}$  or $\infty$ if such $p$ doesn't exist.
Ten years after,  L. Bisiaux in \cite{Bis99}, gave an improvement of the depth theorem by showing that for a large class of spaces one have $depth(H_*(\Omega X, \mathbb{K}) )\leq e_{\mathbb{K}}(X).$

Our first gaol in this paper is to give a strong improvement of L. Bisiaux result and hence of the depth theorem. For this,  we introduce a new homotopical  invariant. In fact, inspired  by the last definition of the Toomer invariant, we use the  Eilenberg-Moore spectral sequence 
\begin{eqnarray}\label{2}
 E_2^{p,q} =Ext_{H_*(\Omega X, \mathbb{K})}^{p,q}( \mathbb{K}, H_*(\Omega X, \mathbb{K})) \Rightarrow \mathcal{E}xt_{C_*(\Omega X, \mathbb{K})}^{p+q}(\mathbb{K}, C_*(\Omega X,  \mathbb{K}))
 \end{eqnarray}
to make the following
 \begin{defn}
 Let $X$ be   a simply connected CW-complex of finite type and $\mathbb{K}$ any field. We define 
 $\textsc{r}(X, \mathbb{K}) = sup\{ p\in \mathbb{N} \mid  \mathcal{E}^{p,*} _{\infty}\; \not = 0\; \}$ or $\infty $ if such $p$  doesn't exist.
 When $\mathbb{K}= \mathbb{Q}$, we denote $ \textsc{r}(X, \mathbb{Q})$ by $\textsc{r}_0(X)$.
 \end{defn} 

 It is clear that this is  invariant by homotopy type of $X$. In order to make an algebraic study of it, we  first associate to any  Sullivan minimal algebra $(\Lambda V,d)$ (cf. Section 2)  the following spectral sequences: 
\begin{eqnarray}\label{3} H^{p,q}(\Lambda V, d_k)\Longrightarrow
H^{p+q}(\Lambda V, d)
\end{eqnarray}
\begin{eqnarray}\label{4} \mathcal{E}xt_{(\Lambda V, d_k)}^{p,q}(\mathbb K, (\Lambda V,
d_k))\Longrightarrow \mathcal{E}xt_{(\Lambda V, d)}^{p+q}(\mathbb K,
(\Lambda V, d)).\end{eqnarray}
Here,  $\mathcal{E}xt$ is  the "differential"-Ext of Eilenberg-Moore \cite{Moo59} and $k$ is such that $d=\sum _{i\geq k}d_i$.  We call these, {\it  generalized (resp. $\mathcal{E}xt$-version generalized) Milnor-Moore spectral sequence}.

Now, given   a $1$-connected commutative  differential graded algebra (cdga for short) $(A,d)$ over  a field $\mathbb{K}$ of $char(\mathbb{K})\not = 2$. Denote by $(\Lambda V,d)$ its Sullivan minimal model (cf. section 2) and  by $\mathcal{E}^{*,*}_{\infty}$ the $\infty$ term of the generalized $\mathcal{E}xt$-version Milnor-Moore spectral sequence \ref{4}. We associate to  $(A,d)$  the following integer:
$$\textsc{r}(A,d) := \textsc{r}(\Lambda V ,d) := sup\{ p\in \mathbb{N}\;  | \:  \mathcal{E}^{p,*} _{\infty}\; \not = 0\; \}$$
with the convention  $\textsc{r}(A,d)  = \infty $ if such $p$ doesn't exists.
In fact,  by Remark $1.3. (1)$ in \cite{FHT88}, this is clearly an (algebraic) invariant which has   properties similar to those  of Toomer invariant (cf.  Remark 4.0.3 for more details). For that raison, we call it the {\it Ext-version  Toomer invariant} of $(A,d)$.

 Recall that to any  nilpotent space $X$ (in particular to
any simply connected space), D. Sullivan  in ~\cite{Sul78}
 associated a unique (up to quasi-isomorphism) minimal cdga $(\Lambda V, d)$ over the rational field $\mathbb{Q}$. This is called the Sullivan minimal model of $X$.
Our first main result reads: 
\begin{thm}
{\it Let $X$ be  a simply connected CW-complex of finite type and $(\Lambda V, d)$
its  Sullivan minimal model. Then, the  $\mathcal{E}xt$-version generalized-Milnor-Moore spectral sequence of  $(\Lambda V,d)$  is isomorphic to the Eilenberg-Moore spectral sequence. }
\end{thm}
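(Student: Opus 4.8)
The plan is to realize both spectral sequences as the spectral sequences of two explicitly filtered cochain complexes, and then to produce a single filtered quasi-isomorphism between them; the classical comparison theorem for filtered complexes (with exhaustive, bounded-below filtrations) then yields an isomorphism of spectral sequences from the first page onward, hence on every page together with all its differentials. On the Sullivan side the relevant complex is $(Hom_{\Lambda V}(\Lambda V\otimes \Gamma(sV),\Lambda V),D^{*})$ filtered by the $\mathcal{F}^{p}$ of the Remark, where $(\Lambda V\otimes \Gamma(sV),D)$ is the acyclic closure, a $(\Lambda V,d)$-semifree resolution of $\mathbb{K}$. On the loop-space side it is $(Hom_{A}(A\otimes B(A),A),\delta)$ filtered by $\mathcal{I\!\!F}^{q}$, with $A=C_{*}(\Omega X)$ and $A\otimes B(A)$ the bar resolution. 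So everything reduces to comparing these two filtered objects.

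First I would set up the bridge between the two module theories. Since $(\Lambda V,d)$ is a minimal model of $C^{*}(X)$ and $X$ is simply connected of finite type, Adams' cobar equivalence together with the finite-type duality $C_{*}(X)\simeq (\Lambda V,d)^{\vee}$ identifies $A=C_{*}(\Omega X)$, up to a chain of dga quasi-isomorphisms, with a model built from the acyclic closure; indeed $(\Gamma(sV))^{\vee}$ with its induced product computes $H_{*}(\Omega X;\mathbb{K})$. Under this identification the coefficient module $A$ corresponds to $(\Lambda V,d)$ and the augmentation onto $\mathbb{K}$ is preserved. Both $A\otimes B(A)$ and $\Lambda V\otimes \Gamma(sV)$ are semifree resolutions of $\mathbb{K}$ (over $A$, resp. over $\Lambda V$), so by the essential uniqueness of semifree resolutions they are comparable; I would fix such a comparison and transport it through $Hom(-,\text{coefficients})$ to obtain a quasi-isomorphism of the two cochain complexes displayed above.

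The decisive point is that this comparison respects the two filtrations, i.e. that the bar-length filtration $\mathcal{I\!\!F}^{q}$ is carried to the word-length filtration $\mathcal{F}^{p}$. This is precisely the generalized Milnor--Moore correspondence underlying the whole paper: the leading term $d_{k}$ of $d$ raises word length by $k-1$, and on the associated graded it must be matched with the leading term of the bar differential on $H_{*}(\Omega X)$. Passing to the initial pages I would then check that the induced map is the Koszul-duality isomorphism $\mathcal{E}xt_{(\Lambda V,d_{k})}^{p,q}(\mathbb{K},(\Lambda V,d_{k}))\cong Ext_{H_{*}(\Omega X,\mathbb{K})}^{p,q}(\mathbb{K},H_{*}(\Omega X,\mathbb{K}))$, using that by Milnor--Moore $H_{*}(\Omega X;\mathbb{K})$ is the universal enveloping algebra on the homotopy Lie algebra and is Koszul dual to $(\Lambda V,d_{k})$.

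The main obstacle I expect is exactly this compatibility of the two filtrations, compounded by the fact that we carry coefficients in the algebra itself rather than in $\mathbb{K}$: one must pin down the reindexing by $k$ so that filtration degrees agree on the nose, verify that the comparison of resolutions can be chosen strictly filtration-preserving (and not merely up to higher-filtration terms), and confirm that the filtrations are exhaustive and complete so that the comparison theorem applies and both spectral sequences converge to the isomorphic abutments $\mathcal{E}xt_{(\Lambda V,d)}^{*}(\mathbb{K},(\Lambda V,d))\cong \mathcal{E}xt_{C_{*}(\Omega X)}^{*}(\mathbb{K},C_{*}(\Omega X))$. Once the filtered quasi-isomorphism is secured, the asserted isomorphism of spectral sequences follows formally.
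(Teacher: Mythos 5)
Your outer strategy (realize both spectral sequences as spectral sequences of filtered complexes, produce a filtration-preserving quasi-isomorphism, and invoke the comparison theorem) is the same as the paper's, but the bridge you build between the two sides does not work, and it is precisely where the whole content of the theorem lies. You propose to identify $A=C_*(\Omega X)$ up to dga quasi-isomorphism with an algebra built from the acyclic closure, and then to compare $A\otimes B(A)$ with $\Lambda V\otimes \Gamma(sV)$ ``by the essential uniqueness of semifree resolutions.'' Uniqueness of semifree resolutions applies over a fixed dga, or along a quasi-isomorphism of dgas; here the two resolutions live over $A$ and over $\Lambda V$ respectively, and these algebras are \emph{not} quasi-isomorphic: $H(A)=H_*(\Omega X;\mathbb{K})$ while $H(\Lambda V,d)\cong H^*(X;\mathbb{K})$. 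For the same reason your sentence ``under this identification the coefficient module $A$ corresponds to $(\Lambda V,d)$'' cannot be justified: if you replace $A$ by any quasi-isomorphic dga $B$, then $\mathcal{E}xt_A(\mathbb{K},A)$ is identified with $\mathcal{E}xt_B(\mathbb{K},B)$, whose coefficients $B$ still have loop-space homology, never with $\mathcal{E}xt_{\Lambda V}(\mathbb{K},\Lambda V)$. The passage from Ext over loop-space chains with coefficients in themselves to Ext over a cochain model of $X$ with coefficients in itself is not a formality about resolutions; it is a duality theorem (the Gorenstein duality of \cite{FHT88}), and no choice of comparison of the two resolutions will produce it.

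The paper supplies exactly this missing step. Following the proof of Theorem 2.1 in \cite{FHT88}, it passes through the endomorphism complex and dualization,
$$Hom_A(A\otimes B(A),A)\cong End_{A\otimes B(A)}(A\otimes B(A))\cong End_{\Omega(A)\otimes A^{\vee}}(\Omega(A)\otimes A^{\vee})\cong Hom_{\Omega(A)}(\Omega(A)\otimes A^{\vee},\Omega(A)),$$
where $\Omega(A)=T(s^{-1}\overline{A^{\vee}})$ is the cobar construction on the dual, a \emph{cochain} algebra model of $X$ (finite type is what makes the middle dualization an isomorphism), and it verifies concretely that this composite carries the bar-length filtration to the cobar word-length filtration. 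Only after this conversion can the Sullivan side enter: the paper then connects $\Omega(A)$ to $(\Lambda V,d)$ by a chain of word-length-preserving quasi-isomorphisms through the Adams--Hilton model $U\mathbb{L}_W$ and the Cartan--Chevalley--Eilenberg complex $C^*(\mathbb{L}_W)$, using that any morphism of Sullivan algebras automatically preserves word length and that $\Omega(A)\otimes A^{\vee}$ is an $\Omega(A)$-semifree resolution of $\mathbb{K}$. This also repairs the filtration compatibility that you correctly flag as ``the main obstacle'' but leave unproved. Finally, note that your identification of the initial pages as ``Koszul duality'' via Milnor--Moore is the classical statement only when $k=2$; for $k>2$ the homotopy Lie algebra is abelian, and the identification of the $E_2$-terms must again come out of the filtered comparison rather than from a general Koszul-duality principle.
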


In \cite{Bis99}, L. Bisiaux introduced the following spectral sequence:
\begin{eqnarray}\label{5} \mathcal{E}xt_{(T(W), d_2)}^{p,q}(\mathbb K, (T(W) , d_2)
)\Longrightarrow \mathcal{E}xt_{(T(W), d)}^{p+q}(\mathbb Q,
(T(W), d)),
\end{eqnarray}
where $(T(W) ,d)$ designate a free model of $C_*(\Omega X, \mathbb{K})$   over an arbitrary field $\mathbb{K}$ (\cite{HL88}).

 Using the first step (which is valid for any field $\mathbb{K}$) in the proof of the previous theorem, we see that  the two spectral sequences \ref{2} and \ref{5} are isomorphic from there second terms. Accordingly, the same arguments as that applied in the proof of L. Bisiaux  main result in \cite{Bis99} gives our improvement for this one as follow: 
 \begin{thm} Let $X$ be a simply connected CW-complex such that each $H_i(X, \mathbb{K})$ is finite dimensional. If  $ev_{C^*(X, \mathbb{K})}$ is non-zero, then $$depth H_*(\Omega X, \mathbb{K})\leq \textsc{r}(X,\mathbb{K})\leq e_{\mathbb{K}}(X).$$
\end{thm}
(The definition of $ev_{C^*(X, \mathbb{K})}$ is recalled in \S 2.2).

For other consequences of  theorem 1.0.2,  note that in one hand, in the rational case,  we have immediately the 
\begin{cor} Let $X$ be as in  the  theorem 1.0.2 with Sullivan minimal model $(\Lambda V,d)$.  Then,
$\textsc{r}_0(X)= \textsc{r}(\Lambda V, d$).
\end{cor}

On the other hand, suppose that  $char(\mathbb{K}) = p > 2$ and let $X$ be a space in the rang of  Anick (\cite{Ani89}), that is $X$ is a finite $r$-connected CW-complex with $r\geq 1$ and $dim(X)\leq rp $. 
In \cite{Hal92}, S. Halperin associated to a such space an unique (up to quasi-isomorphism) Sullivan minimal model $(\Lambda W,d)$. Using properties of this latter (cf. \S 2.1 for more details) and  similar arguments as in the proof of theorem 1.0.2, we  obtain the following
\begin{cor}
Let  $\mathbb{K}$  any field with $char(\mathbb{K}) = p > 2$, $X$  a space in the rang of  Anick and  $(\Lambda V,d)$ its Sullivan minimal model, then $\textsc{r}(X, \mathbb{K})=\textsc{r}(\Lambda V,d)$.
\end{cor}

Returning to the context of rational homotopy theory, in ~\cite{FH82}, Y.
F\'elix and S. Halperin developed a deep approach for computing the
LS-category $cat(X_{\mathbb{Q}})$ of the rationalisation of a $1$-connected space $X$. This is done  by the use of its Sullivan minimal model. Later on, Y. F\'elix, S.
Halperin and J.M. Lemaire ~\cite{FHL} showed that for Poincar\'e
duality spaces, $cat(X_{\mathbb{Q}})$ coincides with the rational Toomer invariant
 $e_{\mathbb{Q}}(X)$.

 Recall for the remainder that a space $X$ is said rationally elliptic, if it has an elliptic  Sullivan minimal  model $(\Lambda V,d)$  such that  $dimV < \infty $ and $dim(H(\Lambda V,d)<\infty $.  Denote
$V^{even}=\bigoplus _{k}V^{2k}$,  $V^{odd}=\bigoplus
_{k}V^{2k+1}$ and  put $d=\sum _{i\geq k}d_i$ with $d_i(V)\subseteq \Lambda ^iV$. Minimality of $(\Lambda V,d)$ requires that $k\geq 2$. 
The main result by L. Lechuga and A. Murillo in
~\cite{LM02}  states  that  if   $(\Lambda V,d_k)$ is also elliptic, then $e_0(\Lambda V, d) = dim (V^{odd})+ (k-2)dim (V^{even})$.

 Our second  goal,  in this paper, is to approach  the  LS-category of the large class of rationally  elliptic  simply connected CW-complex $X$ for which $(\Lambda V,d_k)$  is not necessarily elliptic. 
 In this direction, our first  result gives a more precise formula  of our improvement of L. Bisiaux theorem as follow (for the case where $char(\mathbb{K}) = p >2$, see the final remark below) :
\begin{thm} {\it If $X$ is   a  rationally elliptic finite type simply connected  CW-complex  and $(\Lambda V, d)$
its  Sullivan minimal model, then $cat_0 (X)  \geq \textsc{r}_0(X) + (k-2)$.}
\end{thm}  

To finish this introduction, recall that  to any cdga $(\Lambda V, d)$, S. Halperin  associated in \cite{Hal77},  another one, said {\it pure} and denoted  $(\Lambda V,d_{\sigma})$, with the property:  $(\Lambda V, d)$ is elliptic if and only if $(\Lambda V,d_{\sigma})$ is. 
By the use of \ref{4}, we have:
\begin{prop}  With the notations above, if $(\Lambda V,d)$ is a  Sullivan minimal algebra such that $dim(V)<\infty$, then $\textsc{r}(\Lambda V,d) = \textsc{r}(\Lambda V,d_k)$. As a particular case, we have $
\textsc{r}(\Lambda V,d_{\sigma }) = \textsc{r}(\Lambda V,d_{\sigma , k})$.
\end{prop}

Our second result in this context gives an explicit formula  for $\textsc{r}(\Lambda V,d_{\sigma })$ as follow:

\begin{thm}
Let $(\Lambda V,d)$ any pure   Sullivan minimal algebra.\\ If $dim(V)<\infty $ then $\textsc{r}(\Lambda V,d) = dim(V^{odd})+(k-2)(dim(V^{even})-1).$

\end{thm}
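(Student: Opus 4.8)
The plan is to reduce everything to the homogeneous model $(\Lambda V,d_k)$ and then read $r$ off from the word-length bidegree of the Gorenstein generator. Since $\dim V<\infty$, the Proposition above gives $r(\Lambda V,d)=r(\Lambda V,d_k)$, and because $(\Lambda V,d)$ is pure so is $(\Lambda V,d_k)$; thus $d_k(V^{even})=0$ and $d_k(V^{odd})\subseteq\Lambda^kV^{even}$, so $d_k$ is homogeneous of word-length degree $k-1$. Put $n=\dim V^{even}$ and $m=\dim V^{odd}$. As $\dim V<\infty$, $(\Lambda V,d_k)$ is Gorenstein, so $\mathcal{E}xt_{(\Lambda V,d_k)}^{*}(\mathbb{Q},(\Lambda V,d_k))$ is one dimensional with generator $\Omega$, and by the Remark $r(\Lambda V,d_k)$ is the largest $p$ for which $\Omega$ is represented by a cocycle in $\mathcal{F}^{p}$.

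Next I would make the word-length grading explicit on the complex $\mathcal{A}=\mathrm{Hom}_{(\Lambda V,d_k)}((\Lambda V\otimes\Gamma(sV),D),(\Lambda V,d_k))$ computing this $\mathcal{E}xt$, with differential $\delta=d_k\circ(-)\pm(-)\circ D$. Assigning word-length $k-2$ to each generator of $(\Gamma sV)^{\vee}$ makes $D$, hence $\delta$, homogeneous of word-length degree $k-1$, so $\mathcal{E}xt$ is word-length graded and $\Omega$ acquires a well-defined total word-length $\ell_0$. A monomial of homological degree $\rho$ and total word-length $\ell_0$ has its $\Lambda V$-coefficient in $\Lambda^{\ell_0-(k-2)\rho}V$. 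The generator $\Omega$ is represented by a cocycle concentrated in a single homological degree $N$ (the top of the divided-power tower, built from the $m$ generators dual to $sV^{odd}$), so all its coefficients lie in exactly $\Lambda^{\ell_0-(k-2)N}V$; hence $r(\Lambda V,d_k)=\ell_0-(k-2)N$, with $N=m$ the homological degree carrying $\Omega$. That $\mathcal{E}xt$ is concentrated in homological degree $m$ also matches the depth computation $\mathrm{depth}=\dim V^{odd}$, available through the preceding theorem identifying this spectral sequence with the Eilenberg--Moore one.

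It remains to compute $\ell_0$, which is the heart of the matter. I would produce the fundamental cocycle explicitly. With $Du_i=x_i$ and $Dv_j=y_j-\sum_iQ_{ji}u_i$, where $d_ky_j=\sum_iQ_{ji}x_i$ and $\ell(Q_{ji})=k-1$, the dual differential reads $\delta=d_k+\sum_ix_iu_i^{\vee}+\sum_jy_j\tilde v_j+\sum_{i,j}Q_{ji}\,\tilde v_j\,\partial_{u_i^{\vee}}$. Starting from $\tilde v_1\cdots\tilde v_m$ and correcting it term by term into a cocycle, one tracks the word-length it accumulates: $m$ units from the $y_j$ entering through $\sum_jy_j\tilde v_j$, and $(k-2)(m+n-1)$ from the dual generators together with the $Q_{ji}$-corrections. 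This gives $\ell_0=m+(k-2)(m+n-1)$, and therefore $r=\ell_0-(k-2)m=m+(k-2)(n-1)=\dim V^{odd}+(k-2)(\dim V^{even}-1)$.

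The main obstacle is precisely this last computation: constructing the fundamental cocycle and controlling its word-length in complete generality, above all for elliptic pure models that are not complete intersections ($m>n$), where $\mathbb{Q}[x_1,\dots,x_n]/(d_ky_1,\dots,d_ky_m)$ ceases to be a complete intersection and the divided-power and sign bookkeeping in $\delta$ is most delicate. To sidestep the explicit cocycle I would instead compute $\ell_0$ from the word-length-graded Poincar\'e series (Euler characteristic) of $\mathcal{A}$, using the already-established value $N=m$; since the bidegree of the one-dimensional $\mathcal{E}xt$ is a numerical invariant depending only on $n$, $m$ and $k$, one may even specialize the $d_ky_j$ to a convenient configuration and transport the result to the general case.
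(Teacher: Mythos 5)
Your two preliminary steps are fine: the reduction $r(\Lambda V,d)=r(\Lambda V,d_k)$ via the Proposition, and the weight grading on $\mathcal{A}=Hom_{\Lambda V}(\Lambda V\otimes\Gamma(sV),\Lambda V)$ obtained by giving each dual generator of $sV$ weight $k-2$, so that $\delta$ becomes homogeneous of weight $k-1$ (this device does not appear in the paper and is correct). The proof breaks at the next step: the claim that the Gorenstein generator $\Omega$ is represented by a cocycle concentrated in a single $\Gamma$-degree $N=m$, from which you derive $r=\ell_0-(k-2)m$. This claim is false. For an elliptic pure model the evaluation map is nonzero, so \emph{every} cocycle $\varphi$ representing $\Omega$ has $[\varphi(1)]\neq 0$ in $H(\Lambda V,d_k)$; its $\Gamma^0$-component is $\lambda\omega+d\omega'$ with $\lambda\neq 0$ and $\omega$ the fundamental cocycle. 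Hence the support of $\varphi$ fills a whole interval $0\leq a\leq a_{\max}$ of $\Gamma$-degrees, and for a weight-homogeneous representative this forces $\ell_0$ to equal the wordlength of $\omega$, namely $m+n(k-2)$ (Lechuga--Murillo), not your value $m+(k-2)(m+n-1)$. (The concentration you appeal to is a property of the Eilenberg--Moore $E_2$-term, i.e.\ of resolution degree over $H_*(\Omega X)$; Theorem 1.0.3 does not transport it into a statement about supports of cocycles in the different complex $\mathcal{A}$.) Your two incorrect values cancel to reproduce the target formula, which signals circularity rather than correctness. What the weight bookkeeping actually yields is $r=m+n(k-2)-(k-2)a_{\max}$, with $a_{\max}$ minimized over representatives, and identifying $a_{\max}$ is exactly the hard step your argument never performs. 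It is genuinely delicate: for $\Lambda(x_1,x_2,y_1,y_2)$, $dy_i=x_i^3$ ($k=3$, $m=n=2$), the cocycle conditions force, up to signs, $\varphi(1)=x_1^2x_2^2+d\omega'$, then $\varphi(sx_1)=y_1x_2^2+x_1\omega'$ and $\varphi(sx_2)=x_1^2y_2+x_2\omega'$ (there are no cocycles in the relevant degree), and then $d\varphi(sx_1sx_2)=x_1\varphi(sx_2)-x_2\varphi(sx_1)=d(y_1y_2)$, so $\varphi(sx_1sx_2)\in y_1y_2+\Lambda^5V$: a forced nonzero component on $\Gamma^2$ lying in $\Lambda^2V$, so certainly no representative is concentrated in one $\Gamma$-degree.

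The proposed sidestep does not close the gap. The assertion that ``the bidegree of the one-dimensional $\mathcal{E}xt$ is a numerical invariant depending only on $n$, $m$ and $k$'' is essentially the statement to be proved, not a lemma to quote; no deformation or semicontinuity argument for varying the $Q_{ji}$ is given; a ``convenient configuration'' must exist within the fixed degrees of $V$ (for generic generator degrees no complete-intersection choice such as $dy_j=x_j^k$ is available); the weight-graded pieces of $\mathcal{A}$ are infinite-dimensional when $k=2$; and for $k\geq 3$ a graded Euler characteristic can only locate the class on the diagonal $\ell-(k-1)\cdot(\hbox{cohomological degree})=\hbox{const}$ preserved by $\delta$, whose graded pieces are unbounded, so even its definition needs an argument. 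The paper's proof is built quite differently and uses none of this: it takes from Murillo's description of the evaluation map of a pure model a representative with $\varphi(1)=\omega+d\omega'$, notes (Lechuga--Murillo) that $\omega$ has wordlength exactly $p=m+n(k-2)$, and then extracts wordlength bounds for $\varphi(sx_i)$ and $\varphi(sy_j)$ directly from the cocycle identity $d\varphi(sv)=\pm\varphi(D(sv))$ together with the fact that $d_k$ raises wordlength by exactly $k-1$, concluding $r=p-(k-2)$. Be aware, though, that the paper's argument controls only the components on $sV=\Gamma^1(sV)$, while the filtration $\mathcal{F}^p$ constrains all of $\Gamma(sV)$; the example above shows that the components on $\Gamma^{\geq 2}(sV)$ are precisely where the real difficulty lies, and your concentration claim simply erases that difficulty instead of resolving it.
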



\section{\bf Preliminary} In this section we recall some notions we
will use in the other sections. A commutative differential graded algebra (resp. differential graded algebra, differential graded Lie algebra) will be abbreviated by cdga (resp. dga, dgl). The suspension (resp. desuspension,  dual) of an object N will be denoted by $sN$ (resp. $s^{-1}N$,  $N^{\vee}$).

\subsection{A Sullivan minimal model:}

Let $\mathbb K$ be a field of any characteristic $\not = 2$

 A {\it Sullivan algebra} is a free 
cdga  $(\Lambda V, d)$, where $\Lambda V=
Ext(V^{odd}) \otimes Sym(V^{even})$. It is  generated by the
graded $\mathbb K$-vector space  $V= \oplus _{i=0}^{i=\infty}V^i$ which 
 has a well ordered basis $\{x_{\alpha }\}$ such that
$dx_{\alpha }\in \Lambda V_{<\alpha }$ ($V_{< \alpha} = span\{ v_{\gamma},\; \gamma < \alpha \}$). Such algebra is said {\it
minimal } if it has the property :  $deg(x_{\alpha })< deg(x_{\beta })$ implies $\alpha
<\beta $. 
If $V^0= \mathbb{K}$ and  $V^1=0$, this is equivalent to say that $
d(V)\subseteq \oplus _{i=2}^{i=\infty} \Lambda ^iV$.

 A {\it Sullivan model} for a cdga $(A,d)$
 is a quasi-isomorphism (morphism inducing an isomorphism in cohomology) $(\Lambda
V, d)\stackrel{\simeq } \rightarrow (A,d)$ with the  source a Sullivan
algebra.
 If $H^0(A)=\mathbb K$, $H^1(A)=0$ and
$dim(H^i(A,d))<\infty$ for all $i\geq 0$ then (\cite{Hal92}, Th.
7.1) this minimal model exists.

The uniqueness (up to quasi-isomorphisms) in the rational case was
assured by D. Sullivan in (\cite{Sul78}). Indeed he associated to any
simply connected space $X$ the {\it cdga} $A(X)$ of polynomial
differential forms and showed that its minimal model $(\Lambda V,
d)$ satisfies
$V^i\cong Hom_{\mathbb Z}(\pi _i(X), \mathbb Q) ;\;\;\; \forall
i\geq 2,$ that is, in the case where $X$ is a finite type
CW-complex, the generators of $V$ correspond to those of $\pi
_*(X)\otimes \mathbb Q$.

Now, assume that   $char(\mathbb K)=p> 2$ and let $X$ be an  $r$-connected CW-complex 
with $dim(X)<rp$ ($r\geq 1$) ie $X$ is in the rang of  Anick (\cite{Ani89}). In  \cite{Hal92}, S. Halperin proved that $(C_*(\Omega (X), \mathbb{K})$ is quasi-isomorphic to the enveloping algebra $UL$ of an appropriate dgl $L$. Using its bar-construction $B(UL)$, he  constructed   a filtration preserving quasi-isomorphism   $\Omega (UL) \stackrel{\simeq } \rightarrow C^*(L)$ from $\Omega (UL) = (B(UL))^{\vee}$ to the Cartan-Chevalley-Eilenberg complex associated  to $L$. In addition, under the hypothesis on $X$  and by theorem $7.1$ in \cite{Hal92}, $C^*(L) = (\Lambda V ,d)$, where $V = (sL)^{\vee}$ (Cor. $6.1$ \cite{Hal92}) has a minimal model  $(\Lambda W, d)$. 

 As I know, there is  no
relationship between $W$ and $\pi _*(X)\otimes \mathbb K$.
  
  Notice also that  in his study of ellipticity of rational spaces (\cite{Hal77}), S. Halperin introduced  the following  spectral sequence: 
  \begin{eqnarray}\label{6}
E_2^{p,q} = H^{p,q}(\Lambda V, d_{\sigma })\Longrightarrow H^{p+q}(\Lambda V,
d)
\end{eqnarray}
whose $E_1$-term $(\Lambda V, d_{\sigma })$   (called the  {\it pure cdga} associated to $(\Lambda V, d)$) is provided with the differential
 $d_{\sigma}$ defined as follow:
  $$d_{\sigma}(V^{even})=0\;\;\;\;\; \hbox{and} \;\;\;\;\;\;
  (d-d_{\sigma})(V^{odd})\subseteq \Lambda
  V^{even}\otimes \Lambda ^+V^{odd}.$$
  
This one (called the {\it odd spectral sequence}) is induced by the filtration \\ $F^{p}= (\Lambda V)^{\geq p, *}$, where  $(\Lambda V)^{n+q,- q} =(\Lambda V^{even} \otimes \Lambda ^q V^{odd})^n$.    

For a completeness    and a subsequent use, we give bellow  the filtration inducing the  $\mathcal{E}xt-version$ {\it odd spectral sequence}  introduced  in \cite{Ram99}:
\begin{eqnarray}\label{7}\mathcal{E}xt_{(\Lambda V, d_{\sigma })}^{p,q}(\mathbb Q, (\Lambda V,
d_{\sigma }))\Longrightarrow \mathcal{E}xt_{(\Lambda V,
d)}^{p+q}(\mathbb Q, (\Lambda V, d)).\end{eqnarray}

Consider first  $(\Lambda V\otimes \Gamma sV, D)$ an acyclic closure of $(\Lambda V,d)$. Then  the differential  $D_{\sigma}$ that agree with $d_{\sigma}$ on $V$ and s.t. $D_{\sigma}(sv)=v-s(d_{\sigma}v), \forall v\in V$ shows that 
$(\Lambda V\otimes \Gamma sV, D_{\sigma})$ is one of $(\Lambda V, d_{\sigma})$.
 
Now let 
$$(\Lambda V\otimes \Gamma sV)^{r,s}= \bigoplus _{
a_1,a_2,a_3,\\
 r=a_1+a_2+2a_3}(\Lambda Q\otimes \Gamma ^{a_1} sQ\otimes \Lambda ^{a_2}P\otimes \Gamma ^{a_3} sP)^s.$$
The filtration on  $(A, \mathcal{D}):=(Hom_{\Lambda V} (\Lambda V\otimes \Gamma sV, \Lambda V), \mathcal{D})$   is then given  by $$\mathcal{F'}^p(A^n)=\bigoplus _{r,s}Hom_{\Lambda V} [(\Lambda V\otimes \Gamma sV)^{r,s}, (F^{p+n+r+s}(\Lambda V))^{n+s}].$$ 
(Recall that $ \mathcal{D}(f)=d\circ f+ (-1)^{|f|+1}f\circ D, \forall f\in A$) 
\begin{rem} 
Let $\varphi \in \mathcal{F'}^p(A^n)$, it is clear that $\varphi (1)\in F^p = \Lambda ^{\geq p}V$.
\end{rem}


\subsection{The evaluation map:} Let $(A,d)$ be an
augmented  $\mathbb K$-dga and choose an $(A,d)$-semifree
resolution
 (\cite{FHT88}) $\rho : (P,d) \stackrel{\simeq}\rightarrow (\mathbb
 K,0)$ of $\mathbb K$.
 Providing $\mathbb K$ with the $(A,d)$-module structure induced by
 the augmentation, we define a chain
 map:\\
 $ Hom_{(A,d)}((P,d), (A,d)) \longrightarrow (A,d)$ by
 $f\mapsto f(z)$, where $z\in P$ is a cycle representing $1_{\mathbb{K}}$. Passing to homology, we  obtain the {\it evaluation
 map} of $(A,d)$:
 $$ ev_{(A,d)}:  \mathcal{E}xt_{(A, d)}(\mathbb K,
(A, d)) \longrightarrow  H(A, d),$$
where $\mathcal{E}xt$ is the differential $Ext$ of Eilenberg
and Moore (\cite{Moo59}). Note that the definition of $ ev_{(A,d)}$  is independent on the choice of $(P,d)$ and
$z$. Moreover,   it is natural with respect to $(A,d)$. 

 The authors of
~\cite{FHT88} also defined the concept of  a Gorenstein space over any
field $\mathbb K$. It is a space $X$ such that
$dim\mathcal{E}xt_{C^*(X,\mathbb K)}(\mathbb K, C^*(X,\mathbb
K))=1$. In addition, if $dimH^*(X,\mathbb K)<\infty$,
then $X$ satisfies Poincar\'e duality property over $\mathbb K$ and its
fundamental class is closely related to the evaluation map (\cite{Mur94}).


\subsection{The Toomer invariant:} The Toomer invariant is defined by more  than one way.
Here we recall its definition in terms of Sullivan algebras. Let then
$(\Lambda V, d)$ any minimal one over  a field $\mathbb K$. Put
$p_n: \;\;\;\;\; \Lambda V \rightarrow \frac{\Lambda V}{\Lambda ^{\geq
n+1}V}$,  the projection onto the quotient dga obtained by factoring out by the differential graded ideal
generated by monomials of length at least $n+1$. {\it The
"commutative" Toomer invariant } $e_{ \mathbb K}(\Lambda V,d)$ of
$(\Lambda V, d)$ is the smallest integer $n$ (possibly   $\infty $) such that $p_n$ induces
an injection in cohomology.

 In ~\cite{HL88},
S. Halperin and J.M. Lemaire extended  the last definition to free models. Hence they associated to any simply connected finite
type CW-complex $X$, the invariant
$e_{\mathbb{K}}(T(W),d)$ in terms of its free minimal model $(T(W),d)$ over $\mathbb{K}$. They also  showed
that it coincides  with the classical Toomer invariant $e_{\mathbb{K}}(X)$. Consequently
if $char(\mathbb K)\not = 2$ and $X$  is in the rang of  Anick, then, using both its Sullivan minimal
model  and its free minimal model, one  deduce
 that $e_{\mathbb K}(X) = e_{\mathbb
K}(\Lambda V,d) = e_{\mathbb{K}}(T(W),d)$ (cf
 \cite{HL88}, Th 3.3 for more details).

For $\mathbb{K}=\mathbb{Q}$, we shall denote $e_0(\Lambda V,d)$ instead of $e_{\mathbb
Q}(\Lambda V,d)$.


\begin{rem} 
\begin{enumerate}
\item The definition given above for  $e_{0}(\Lambda V,d)$ is also expressed in terms of the Milnor-Moore spectral sequence (which coincides with \ref{3},  for $k=2$) (cf. \cite{FH82}) by
 $e_{0}(\Lambda V, d)= sup\{p \mid E_{\infty}^{p,q}\not = 0\}\; or\; \infty.$
\item
 In (~\cite{FH82}, Lemma 10.1), Y. F\'elix and S. Halperin showed  also that
whenever $H(\Lambda V,d)$ has Poincar\'e duality and $\omega$ represents its fundamental class, then $e_{0}( \Lambda V,d)=sup\{k \; /\; \omega \; \hbox{ can be
represented by } \hbox{a cycle in}\;   \Lambda ^{\geq k}V\}.$
   
By Lemma 2.1 in \cite{Bis99},   this characterisation remains true for any field $\mathbb  K$ when we replace $(\Lambda V,d)$ by $(T(W),d)$.
\end{enumerate}
\end{rem}
\subsection{The Eilenberg-Moore spectrale sequence:}
Now let $\mathbb{K}$ be any field, $X$    a simply connected CW-complex of finite type and  $\Omega X$ the space of based loops on $X$. Denote by $(A,d)=(C_*(\Omega X),d)$ the chain complex on  $\Omega X$  and $\bar A= ker(\varepsilon : A \rightarrow \mathbb{K})$. So $B(A) = \bigoplus _{n\geq 0}T^n(s\bar A)$ (resp. $A\otimes B(A)= \bigoplus _{n\geq 0}A\otimes T^n(s\bar A)$) is the reduced bar construction on $(A,d)$ with coefficients in $\mathbb{K}$ (resp. in $(A,d)$) (\cite{FHT01}, p. 268). 

 The  filtration   $I\!\!F^q=\bigoplus _{k\leq q}A\otimes T^k(s\bar A)$ on $A\otimes B(A)$ is  such that $d(T^q(s\bar A))\subseteq \bigoplus _{k< q}A\otimes T^k(s\bar A)$.  It is an $A$-semifree resolution of $\mathbb{K}$ and   gives rise to a spectral sequence with first term $E_1^q = H(A,d)\otimes T^q(s\bar A)$. By (\cite{FHT01} pro. 20.11) one can suppose this an $H(A)$-semifree resolution of $\mathbb{K}$.  Hence the filtration on\\  $Hom_A(A\otimes B(A), A)$ defined by: 
 \begin{eqnarray} \label{10}
 \mathcal{I\!\!F}^q=\{ f\in Hom_A(A\otimes B(A), A) \mid f(I\!\!F^k) = 0, \forall k<q \}
 \end{eqnarray}
  induces  the converging Eilenberg-Moore spectral sequence:  
 $$E_2^{p,q} =Ext_{H_*(\Omega X, \mathbb{K})}^{p,q}( \mathbb{K}, H_*(\Omega X, \mathbb{K})) \Rightarrow \mathcal{E}xt_{C_*(\Omega X, \mathbb{K})}^{p+q}(\mathbb{K}, C_*(\Omega X,  \mathbb{K}))$$

 
\section{\bf  Generalized Milnor-Moore spectral sequences :} In this section, we will work
over an arbitrary   field $\mathbb{K}$. Let $(\Lambda V,d)$ a {\it cdga }
with $dim(V)<\infty$. Suppose that $d=\sum _{i\geq k}d_i$ and $k\geq
2$. The filtrations that induce the spectral sequences \ref{3}
and \ref{4} given in the introduction are defined respectively as
follow: \begin{eqnarray} \label{8} F^p= \Lambda ^{\geq
p}V=\bigoplus_{i=p}^{\infty }\Lambda ^{i}V,\end{eqnarray}
\begin{eqnarray} \label{9} \mathcal{F}^p=\{f\in Hom_{\Lambda V}(\Lambda V\otimes \Gamma
(sV),\Lambda V)\; \mid \; f(\Gamma (sV))\subseteq \Lambda ^{\geq p}V.
\}\end{eqnarray}

Recall that $\Gamma (sV)$ is the divided power algebra of $sV$ and the differential $D$ on $\Gamma (sV)\otimes\Lambda V $ is a $\Gamma$-derivation
(ie $D(\gamma^p(sv))=D(sv)\gamma ^{p-1}(sv),  p\geq 1, sv\in (sV)^{even}, and D(sv)=v+s(dv)$)
which restrict to $d$ in $\Lambda V$. With this
differential, $(\Lambda V\otimes \Gamma (sV), D)$ is a dga called an
{\it acyclic closure of} $(\Lambda V,d)$, hence it is a $(\Lambda
V,d)$-semifree module. Therefore the projection $(\Lambda V\otimes \Gamma
(sV), D) \stackrel {\simeq } {\longrightarrow} \mathbb K$ is a
semifree resolution of $\mathbb K$.

Recall also that de differential $\mathcal{D}$ of $Hom_{(\Lambda
V,d)}((\Lambda V\otimes \Gamma (sV), D),(\Lambda V,d))$ is defined
by: $ \mathcal{D}(f)=d\circ f+ (-1)^{|f|+1}f\circ D $.

Let us denote $A=\Lambda V$ (resp. $ Hom_{\Lambda V}(\Lambda
V\otimes \Gamma (sV),\Lambda V)$), $G^p =F^p$ (resp.
$G^p=\mathcal{F}^p$) and $\delta = d$ (resp. $\delta = \mathcal{D}$) the differential of $A$.

The  filtrations \ref{8} and \ref{9} verify 
the following lemma and then they define the tow spectral
sequences \ref{3} and \ref{4}.

\begin{lem}
(i) $(G^p )_{p\geq 0}$  is decreasing.\\
(ii) $G^0 (A) =A.$\\
(iii) $\delta (G^p(A)) \subseteq G^p(A) .$
 \end{lem}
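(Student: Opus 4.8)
The plan is to treat the three assertions for both incarnations of $(A,G^p,\delta)$ simultaneously, observing that (i) and (ii) are immediate from the definitions and that all the content lies in (iii). For (i), in the commutative case $F^{p+1}=\Lambda^{\geq p+1}V\subseteq\Lambda^{\geq p}V=F^p$, and in the $Hom$ case the same inclusion $\Lambda^{\geq p+1}V\subseteq\Lambda^{\geq p}V$ forces $\mathcal{F}^{p+1}\subseteq\mathcal{F}^p$. For (ii), $F^0=\Lambda^{\geq 0}V=\Lambda V=A$, while $\mathcal{F}^0$ consists of those $f$ with $f(\Gamma(sV))\subseteq\Lambda^{\geq 0}V=\Lambda V$, which is no condition at all, so $\mathcal{F}^0=A$.

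The single fact driving (iii) is that $d$ strictly raises word length. Since $d=\sum_{i\geq k}d_i$ with $k\geq 2$ and each $d_i$ sends $V$ into $\Lambda^i V$, the derivation property gives $d(\Lambda^n V)\subseteq\Lambda^{\geq n+k-1}V\subseteq\Lambda^{\geq n+1}V$, and hence $d(\Lambda^{\geq p}V)\subseteq\Lambda^{\geq p}V$. This is already (iii) in the commutative case, where $\delta=d$ and $G^p=F^p=\Lambda^{\geq p}V$.

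For the $Hom$ case I would first upgrade the defining condition. Although $\mathcal{F}^p$ is defined only through the values of $f$ on $\Gamma(sV)$, the $\Lambda V$-linearity of $f$ together with the fact that $\Lambda^{\geq p}V$ is an ideal of $\Lambda V$ promotes this into a statement about the whole image: for $x=\sum_i\lambda_i\otimes\gamma_i$ one has $f(x)=\sum_i\lambda_i\,f(\gamma_i)\in\Lambda^{\geq p}V$, so the image of $f$ is contained in $\Lambda^{\geq p}V$ for every $f\in\mathcal{F}^p$. Granting this, I apply it to $\mathcal{D}(f)=d\circ f+(-1)^{|f|+1}f\circ D$: the term $d\circ f$ lands in $d(\Lambda^{\geq p}V)\subseteq\Lambda^{\geq p}V$ by the word-length fact above, while $f\circ D$ lands in $f(\Lambda V\otimes\Gamma(sV))\subseteq\Lambda^{\geq p}V$ because $D$ preserves its domain $\Lambda V\otimes\Gamma(sV)$ and $f$ already has image in $\Lambda^{\geq p}V$. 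Thus the image of $\mathcal{D}(f)$ lies in $\Lambda^{\geq p}V$, so a fortiori $\mathcal{D}(f)(\Gamma(sV))\subseteq\Lambda^{\geq p}V$ and $\mathcal{D}(f)\in\mathcal{F}^p$.

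The main obstacle, and really the only place that demands care, is the cross-term $f\circ D$ in the $\mathcal{E}xt$-case. A naive word-length bookkeeping on $D$ is hazardous, since $D(sv)=v\mp s(dv)$ carries the length-one term $v$, and one might fear that $f$ of it, namely $v\cdot f(1)$, could drop below filtration level $p$. The resolution is to avoid tracking $D$ altogether and instead use the structural observation above: because $\Gamma(sV)$ contains its unit and $f$ is $\Lambda V$-linear with image in the ideal $\Lambda^{\geq p}V$, the entire image of $f$ already sits in $\Lambda^{\geq p}V$, so no cancellation or length-counting for $D$ is needed. Once this is in place, (iii) becomes immediate and entirely independent of the precise formula for $D$; it uses only that $D$ restricts to $d$ on $\Lambda V$ and maps $\Lambda V\otimes\Gamma(sV)$ into itself.
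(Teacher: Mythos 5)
Your proof is correct, and your treatment of (i) and (ii) matches the paper's, which simply declares them immediate. For (iii), though, you take a genuinely different route from the one the paper sketches. The paper's proof rests on three ingredients: (a) the definition of $\mathcal{D}$, (b) minimality of $d$, and (c) the explicit formula $D(\gamma^q(sv)) = D(sv)\gamma^{q-1}(sv) = (v+s(dv))\gamma^{q-1}(sv)$ --- that is, it tracks how $D$ acts on divided powers and checks word lengths term by term. You deliberately discard ingredient (c), replacing it by the structural observation that, since $\Lambda^{\geq p}V$ is an ideal of $\Lambda V$ and $f$ is $\Lambda V$-linear, the defining condition $f(\Gamma(sV))\subseteq \Lambda^{\geq p}V$ already forces the whole image of $f$ into $\Lambda^{\geq p}V$; the cross-term $f\circ D$ then lands in $\Lambda^{\geq p}V$ for the trivial reason that $D$ preserves $\Lambda V\otimes \Gamma(sV)$. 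This is more economical and more robust: it uses nothing about $D$ beyond its existence, and it needs only $d(\Lambda^{\geq p}V)\subseteq \Lambda^{\geq p}V$ rather than full minimality. What the paper's finer bookkeeping buys, and what your argument does not recover, is the stronger statement that $\mathcal{D}$ actually raises the filtration: by (c) every term of $D(1\otimes\gamma)$ carries a $\Lambda V$-coefficient of word length at least $1$, and $d$ raises word length by $k-1\geq 1$, so in fact $\mathcal{D}(\mathcal{F}^p)\subseteq \mathcal{F}^{p+1}$. That sharper fact is what is really at work in the next subsection, where the paper computes $d_0=\cdots=d_{k-2}=0$ and identifies the first nonzero differential of the spectral sequences. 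Since the lemma as stated only claims preservation of the filtration, your proof of the lemma itself is complete; just be aware that it cannot be quoted later in place of the refined word-length analysis.
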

 
\begin{pf}
(i) and  (ii) are immediate. The propriety  (iii) follows, first from  the
definition of $\mathcal{D}$ on $Hom_{(\Lambda V,d)}((\Lambda
V\otimes \Gamma (sV), D),(\Lambda V,d))$, second as  $d$ is
minimal and finally because $D(\gamma^p(sv))= D(sv)\gamma ^{p-1}(sv)=
(v+s(dv))\gamma ^{p-1}(sv)$.
\end{pf}

\subsection{Determination of the first terms of the tow spectral
sequences:} The two filtrations \ref{8} and \ref{9} are bounded, so they induce convergent
spectral sequences. We calculate here  there
first terms.

Beginning with the filtration \ref{8}, one can check easily the following:
$$E_0^p=F^p/F^{p+1}\cong  E_1^p\cong \ldots \cong E_{k-2}^p \cong \Lambda ^pV.$$

The first non-zero differential
is $\partial _{k-1}: E_{k-1}^p\rightarrow E_{k-1}^{p+(k-1)}$ which
coincides with the first non zero  term  $d_k$ in the differential $d$
of $(\Lambda V,d)$, and subsequently $(E_{k-1},\partial _{k-1})=(\Lambda V,d_k)$.
So the first term in the induced spectral sequence \ref{3}
is
$E_{k}=H(\Lambda V,d_k).$

For the second spectral sequence \ref{4}, its general term is:
 $$\mathcal {E}_r^p=\frac{ \{ f\in \mathcal{F}^p,\; \mathcal{D}(f)\in \mathcal {F}^{p+r}\}}
  { \{ f\in \mathcal {F}^{p+1},\; \mathcal {D}(f)\in \mathcal {F}^{p+r}\} + \mathcal{F}^p\cap \mathcal{D}(\mathcal{F}^{p-r+1})\} }.$$
  
We first prove the
following important lemma:
\begin{lem}
Let $f\in \mathcal{F}^p$, then for any $p\geq 0$,\\
1- $\mathcal{D}(f)\in \mathcal{F}^{p+2} \Leftrightarrow f\in Ker(
\mathcal{D}_2).$\\
2- $f-\mathcal{D}(g)\in
\mathcal{F}^{p+1} \Leftrightarrow  f-\mathcal{D}_2(g)\in \mathcal{F}^{p+1}
$
\end{lem}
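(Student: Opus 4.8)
The plan is to establish both equivalences simultaneously by analysing how $\mathcal{D}$ interacts with the word-length filtration $\mathcal{F}^{\bullet}$. Concretely, I would show that $\mathcal{D}$ splits as $\mathcal{D}=\mathcal{D}_2+R$, where $\mathcal{D}_2$ is the part of $\mathcal{D}$ that raises $\mathcal{F}$ by \emph{exactly} one (equivalently, the differential it induces on the associated graded) and $R:=\mathcal{D}-\mathcal{D}_2$ raises it by \emph{at least} two, i.e. $R(\mathcal{F}^p)\subseteq\mathcal{F}^{p+2}$ for every $p$. Granting this, both statements of the lemma are one-line filtration chases.

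To produce the splitting I would start from $\mathcal{D}(f)=d\circ f+(-1)^{|f|+1}f\circ D$ and decompose the acyclic-closure differential as $D=D'+D''$: here $D'$ is the linear (Koszul) part determined by $D'(sv)=v$ together with the $\Gamma$-derivation rule $D'(\gamma^{m}(sv))=v\,\gamma^{m-1}(sv)$, while $D''(sv)=s(dv)$ carries $d=\sum_{i\geq k}d_i$. For $f\in\mathcal{F}^p$ I then compute the filtration jump of each constituent. Minimality (no linear term in $d$) gives that $d_i$ raises word-length by $i-1$, so $d\circ f$ lands in $\mathcal{F}^{p+k-1}$; the Koszul term yields $\pm v\,f(\gamma^{m-1}(sv))\in\Lambda^{\geq p+1}V$ and so raises $\mathcal{F}$ by exactly one; and since $s(d_iv)\in\Lambda^{i-1}V\otimes\Gamma(sV)$, the term $f\circ D''$ again lands in $\mathcal{F}^{p+k-1}$. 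Because $k\geq2$, the Koszul contribution is the only one of jump one when $k\geq3$, whereas for $k=2$ it is joined by $d_2\circ f$ and $f\circ D''_2$; collecting all jump-one pieces into $\mathcal{D}_2$ leaves $R$ with jump $\geq2$. (Thus $\mathcal{D}_2$ is the bare Koszul differential for $k\geq3$, and the full acyclic-closure Hom differential of $(\Lambda V,d_2)$ for $k=2$.) Note that the sharper statement $\mathcal{D}(\mathcal{F}^p)\subseteq\mathcal{F}^{p+1}$, stronger than clause (iii) of the previous lemma, is exactly the assertion that there is no jump-zero part, which is again minimality.

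The two equivalences now follow. For part (1), take $f\in\mathcal{F}^p$; then $\mathcal{D}_2(f)\in\mathcal{F}^{p+1}$ and $R(f)\in\mathcal{F}^{p+2}$, so $\mathcal{D}(f)\in\mathcal{F}^{p+2}$ holds iff $\mathcal{D}_2(f)\in\mathcal{F}^{p+2}$, i.e. iff the class of $\mathcal{D}_2(f)$ in $\mathcal{F}^{p+1}/\mathcal{F}^{p+2}$ vanishes, which is exactly $f\in\ker\mathcal{D}_2$. For part (2), I would apply the splitting to $g$ in the range that actually occurs in $\mathcal{E}_r^p$ for $r=2$, namely $g\in\mathcal{F}^{p-1}$ (from the term $\mathcal{F}^p\cap\mathcal{D}(\mathcal{F}^{p-r+1})$): there $R(g)\in\mathcal{F}^{p+1}$, hence $\mathcal{D}(g)\equiv\mathcal{D}_2(g)\pmod{\mathcal{F}^{p+1}}$, and therefore $f-\mathcal{D}(g)\in\mathcal{F}^{p+1}$ iff $f-\mathcal{D}_2(g)\in\mathcal{F}^{p+1}$.

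The real content, and the step I expect to be the main obstacle, is the jump bookkeeping of the second paragraph: one must treat \emph{simultaneously} the two mechanisms that raise word-length — $d$ acting on the $\Lambda V$-target and $D$ acting on the $\Gamma(sV)$-source — and verify, after expanding the divided-power Leibniz rule on an arbitrary monomial of $\Gamma(sV)$, that the only jump-one contribution is the Koszul term coming from $D'$, while the contributions of $d$ and of $D''$ raise $\mathcal{F}$ by $\geq k-1$, hence by $\geq2$ except for the length-two pieces $d_2,D''_2$, which occur only for $k=2$ and are then absorbed into $\mathcal{D}_2$. This is precisely where minimality $k\geq2$ and the explicit form $D(sv)=v+s(dv)$, $D(\gamma^{m}(sv))=D(sv)\gamma^{m-1}(sv)$ of the acyclic closure enter; the signs from $(-1)^{|f|+1}$ and from the Koszul rule are irrelevant to filtration degree and can be suppressed.
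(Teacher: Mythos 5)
Your proof is correct and follows essentially the same route as the paper: the paper's terse chain of equivalences rests exactly on the fact that $\mathcal{D}-\mathcal{D}_2$ raises the filtration by at least two (minimality of $d$ plus the explicit form of $D$ on the acyclic closure), which is the splitting $\mathcal{D}=\mathcal{D}_2+R$ that you make explicit. Your extra bookkeeping --- reading $Ker(\mathcal{D}_2)$ on the associated graded and restricting to $g\in\mathcal{F}^{p-1}$ in part 2 --- merely fills in details the paper leaves implicit.
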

\begin{pf} Using the relations
 $ \mathcal{D}(f)=d\circ f+ (-1)^{|f|+1}f\circ D $, $D(sv)=v+s(dv)$ and $D(\gamma^p(sv))= D(sv)\gamma ^{p-1}(sv)$, we have successively:
\\
1- $\mathcal{D}(f)\in
\mathcal{F}^{p+2}\Leftrightarrow
\mathcal{D}(f)(\Gamma (sV))\cap \Lambda ^{p+1}V=\{0\} \Leftrightarrow
\mathcal{D}_2(f)(\Gamma (sV))\cap \Lambda ^{p+1}V =\{0\} \Leftrightarrow f\in Ker(\mathcal{D}_2)$
and
\\
2- $f-\mathcal{D}(g)\in
\mathcal{F}^{p+1} \Leftrightarrow
 (f-\mathcal{D}(g))(\Gamma (sV))
\cap \Lambda ^{p}V =\{0\}\Leftrightarrow (f-\mathcal{D}_2(g))\cap \Lambda ^{p}V =\{0\}$.
\end{pf}

 Whence
$(\mathcal {E}_0^p, d_0)=(\mathcal {F}^p/\mathcal {F}^{p+1}, 0)$ and 
$(\mathcal {E}_1^p, d_1)=(\mathcal {E}_0^p, d_1)=(\mathcal{ F}^p/\mathcal {F}^{p+1}, d_1)$,
where $d_1: \mathcal {F}^p/\mathcal {F}^{p+1}\rightarrow
\mathcal{F}^{p+1}/\mathcal{F}^{p+2}$ is such that $\forall f\in
\mathcal {F}^p$, $\forall  g\in \mathcal {F}^{p-1}$:
$$
 \left \{ \begin{array}{l}
   d_1(\bar f)=\bar 0\Leftrightarrow \mathcal{D}(f)\in
\mathcal{F}^{p+2}\Leftrightarrow
f\in Ker(
\mathcal{D}_2)\Leftrightarrow \overline{\mathcal{D}}_2(\bar f) =\bar 0\\
 \bar f- d_1(\bar g)=\bar 0\Leftrightarrow f-\mathcal{D}(g)\in
\mathcal{F}^{p+1} \Leftrightarrow f-\mathcal{D}_2(g)\in
\mathcal{F}^{p+1} \Leftrightarrow \bar f = \overline{\mathcal{D}}_2(\bar g)
\end{array}\right.
$$

As a consequence:

If $k=2$,
$\mathcal {E}_2^*= \mathcal
{E}xt_{(\Lambda V, d_2)}^*(\mathbb{Q}, (\Lambda V, d_2)),$ which is
exactly the first term of the Ext-Milnor-Moore spectral sequence
introduced by A. Murillo in  ~\cite{Mur94}.

If $d_2 = 0$, the differential ${\mathcal{D}}_2$ is reduced to ${\mathcal{D}}_0$, where ${\mathcal{D}}_0(f)= (-1)^{|f|+1}f\circ D_0 $, $D_0(\gamma^p(sv))= D_0(sv)\gamma ^{p-1}(sv)=
v\gamma ^{p-1}(sv)$. Then
$\mathcal {E}_2^*= \mathcal
{E}xt_{(\Lambda V, 0)}^*(\mathbb{Q}, (\Lambda V, 0)).$

Consider  now  the case were $k = 3$ that is $d_3 \not = 0$:

The general term is then:
$$\mathcal {E}_3^p=\frac{ \{ f\in
\mathcal{F}^p,\; \mathcal{D}(f)\in \mathcal {F}^{p+3}\}}
  { \{ f\in \mathcal {F}^{p+1},\; \mathcal {D}(f)\in \mathcal {F}^{p+3}\} + \mathcal{F}^p\cap \mathcal{D}(\mathcal{F}^{p-2}) }$$

  Notice that: $$f\in \mathcal {F}^{p} \; \hbox{and} \; \mathcal {D}(f)\in \mathcal {F}^{p+3}\Leftrightarrow f\in \mathcal {F}^p\cap Ker(\mathcal {D}_3),$$
 therefore   the natural projection $\gamma : \mathcal {F}^p\cap Ker(\mathcal {D}_3)\rightarrow \mathcal {E}_3^p$ is well defined and we have:
 \begin{lem} The morphism ${\gamma}:  \mathcal {F}^p\cap Ker(\mathcal {D}_3) \rightarrow \mathcal {E}_3^p$ of graded $\mathbb{Q}$-vector spaces 
 is   surjective  and its Kernel is:  $Ker(\gamma) = \mathcal {F}^p \cap Im(\mathcal {D}_3)+\mathcal {F}^{p+1}\cap Ker(\mathcal {D}_3)$.
 \end{lem}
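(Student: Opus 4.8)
The plan is to verify that $\gamma$ is a well-defined surjection onto $\mathcal{E}_3^p$ and then to compute its kernel directly from the explicit formula for $\mathcal{E}_3^p$. First I would observe that the numerator of $\mathcal{E}_3^p$ is precisely $\{f\in\mathcal{F}^p : \mathcal{D}(f)\in\mathcal{F}^{p+3}\}$, and by the preceding computation (analogous to Lemma with $\mathcal{D}_2$, now applied at level $3$) this set equals $\mathcal{F}^p\cap\operatorname{Ker}(\mathcal{D}_3)$, exactly the domain of $\gamma$. Hence $\gamma$ is just the restriction of the canonical quotient map from the numerator onto the quotient $\mathcal{E}_3^p$, so it is \emph{tautologically surjective}; surjectivity requires no work beyond this identification.

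The substance of the lemma is the kernel computation. Since $\gamma$ is the quotient map by the denominator subspace, I would argue that $\operatorname{Ker}(\gamma)$ is the intersection of the domain $\mathcal{F}^p\cap\operatorname{Ker}(\mathcal{D}_3)$ with the denominator
$$\{f\in\mathcal{F}^{p+1}:\mathcal{D}(f)\in\mathcal{F}^{p+3}\}+\bigl(\mathcal{F}^p\cap\mathcal{D}(\mathcal{F}^{p-2})\bigr).$$
The first summand, intersected with $\operatorname{Ker}(\mathcal{D}_3)$, should reduce to $\mathcal{F}^{p+1}\cap\operatorname{Ker}(\mathcal{D}_3)$, again via the level-$3$ version of the characterization $\mathcal{D}(f)\in\mathcal{F}^{p+3}\Leftrightarrow f\in\operatorname{Ker}(\mathcal{D}_3)$. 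The second summand I expect to collapse to $\mathcal{F}^p\cap\operatorname{Im}(\mathcal{D}_3)$: for $g\in\mathcal{F}^{p-2}$ the element $\mathcal{D}(g)$ agrees with $\mathcal{D}_3(g)$ modulo higher filtration (because $k=3$ means $d=d_3+d_4+\cdots$, so the lowest-degree raising part of $\mathcal{D}$ is $\mathcal{D}_3$, shifting filtration by exactly $3$), and landing in $\mathcal{F}^p$ forces the contribution to be an honest image under $\mathcal{D}_3$. Adding the two gives the asserted $\operatorname{Ker}(\gamma)=\mathcal{F}^p\cap\operatorname{Im}(\mathcal{D}_3)+\mathcal{F}^{p+1}\cap\operatorname{Ker}(\mathcal{D}_3)$.

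The main obstacle will be the careful bookkeeping in the second summand: one must show that intersecting $\mathcal{D}(\mathcal{F}^{p-2})$ with $\mathcal{F}^p$ does not produce extra elements beyond $\operatorname{Im}(\mathcal{D}_3)|_{\mathcal{F}^p}$, i.e.\ that the higher components $\mathcal{D}_4,\mathcal{D}_5,\dots$ of an element $\mathcal{D}(g)$ with $g\in\mathcal{F}^{p-2}$ cannot conspire to land in $\mathcal{F}^p\cap\operatorname{Ker}(\mathcal{D}_3)$ in a way not already captured. The clean way to handle this is to work modulo $\mathcal{F}^{p+1}$ from the outset: since the target $\mathcal{E}_3^p$ only sees the associated-graded piece at level $p$, every element of $\mathcal{D}_3(g)$ for $g\in\mathcal{F}^{p-2}$ that sits in $\mathcal{F}^p$ contributes its leading term in $\operatorname{Im}(\mathcal{D}_3)$, while $\mathcal{F}^{p+1}\cap\operatorname{Ker}(\mathcal{D}_3)$ absorbs the rest. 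I would therefore prove the two inclusions $\operatorname{Ker}(\gamma)\subseteq\mathcal{F}^p\cap\operatorname{Im}(\mathcal{D}_3)+\mathcal{F}^{p+1}\cap\operatorname{Ker}(\mathcal{D}_3)$ and its reverse separately, using the filtration-shift property of $\mathcal{D}_3$ established from the formula $\mathcal{D}(f)=d\circ f+(-1)^{|f|+1}f\circ D$ together with $D(sv)=v+s(dv)$ exactly as in the proof of the previous lemma.
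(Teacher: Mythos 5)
Your identification of the numerator with $\mathcal{F}^p\cap Ker(\mathcal{D}_3)$, the observation that $\gamma$ is tautologically surjective, and the reduction of $Ker(\gamma)$ to the denominator subspace all agree with the paper's proof, as does your treatment of the first summand. The trouble is concentrated in the second summand, on two counts. First, the equality you aim for, $\mathcal{F}^p\cap\mathcal{D}(\mathcal{F}^{p-2})=\mathcal{F}^p\cap Im(\mathcal{D}_3)$, is false: for $h\in\mathcal{F}^{p-2}$ one has $\mathcal{D}(h)=\mathcal{D}_3(h)+\mathcal{D}_{\geq 4}(h)$, and the tail $\mathcal{D}_{\geq 4}(h)$ is in general a nonzero element that is \emph{not} in $Im(\mathcal{D}_3)$, so ``landing in $\mathcal{F}^p$'' does not force $\mathcal{D}(h)$ to be an honest $\mathcal{D}_3$-image. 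Consequently the denominator cannot be matched summand-by-summand against $\mathcal{F}^p\cap Im(\mathcal{D}_3)+\mathcal{F}^{p+1}\cap Ker(\mathcal{D}_3)$; the two decompositions only agree after a cross-redistribution. This is exactly how the paper proceeds: given $f=g+\mathcal{D}(h)$ in the denominator it rewrites $f=\mathcal{D}_3(h)+\bigl(g+\mathcal{D}_{\geq 4}(h)\bigr)$ and shows that the \emph{combined} remainder $g+\mathcal{D}_{\geq 4}(h)$ lies in $\mathcal{F}^{p+1}\cap Ker(\mathcal{D}_3)$ --- not $g$ and $\mathcal{D}_{\geq 4}(h)$ separately. (A side remark: intersecting the domain with the denominator summand-by-summand is not legitimate in general, since intersection does not distribute over sums of subspaces; you are saved only because the denominator is already contained in the numerator, so no intersection is needed.)

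Second, and this is the genuine gap, your ``absorption'' step is asserted but never justified, and it cannot be justified by the tools you cite. Membership of the remainder in $\mathcal{F}^{p+1}$ is indeed pure filtration bookkeeping ($g\in\mathcal{F}^{p+1}$ and $\mathcal{D}_{\geq 4}$ raises filtration by at least $3$), but membership in $Ker(\mathcal{D}_3)$ is an exact algebraic condition, not a filtration condition, so the ``filtration-shift property of $\mathcal{D}_3$'' can never deliver it. The paper closes precisely this point with the one identity missing from your sketch: since $f$ lies in the numerator $=\mathcal{F}^p\cap Ker(\mathcal{D}_3)$, and since $\mathcal{D}_3^2=0$ (because $d_3$ is the leading part of a differential, so $(\Lambda V,d_3)$ is itself a cdga and $\mathcal{D}_3$ is its $Hom$-differential), one computes $\mathcal{D}_3\bigl(g+\mathcal{D}_{\geq 4}(h)\bigr)=\mathcal{D}_3(f)-\mathcal{D}_3\bigl(\mathcal{D}_3(h)\bigr)=0$. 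Working ``modulo $\mathcal{F}^{p+1}$ from the outset,'' as you propose, only shows the remainder is small in filtration; that would identify $\mathcal{E}_3^p$ with an associated-graded object, which is strictly weaker than the lemma's claim that the kernel is the subspace $\mathcal{F}^p\cap Im(\mathcal{D}_3)+\mathcal{F}^{p+1}\cap Ker(\mathcal{D}_3)$ on the nose. To repair your argument, insert the displayed identity above at the absorption step; everything else in your outline then goes through.
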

 \begin{pf} $\gamma$ is clearly a surjective morphism of graded $\mathbb{Q}$ vector-spaces. Let $f\in Ker (\gamma )= \{ \varphi \in \mathcal {F}^{p+1},\; \mathcal {D}(\varphi )\in \mathcal {F}^{p+3}\} + \mathcal{F}^p\cap \mathcal{D}(\mathcal{F}^{p-2}) $. Write $f=g+\mathcal {D}(h)= \mathcal {D}_3 (h)+(g+\mathcal {D}_{\geq 4}(h))$, where $g\in \mathcal {F}^{p+1}, \mathcal {D}(g)\in \mathcal {F}^{p+3}$ and $h\in \mathcal{F}^{p-2}$. As $\mathcal {D}_3(h)\in \mathcal {F}^p$ and  $\mathcal{D}_3(f)=0$, we have $g+\mathcal {D}_{\geq 4}(h)\in \mathcal {F}^{p+1}\cap Ker(\mathcal {D}_3)$.
\end{pf}

 Consequently, we have the following isomorphisms of graded $\mathbb{Q}$-vector spaces:
$$ \mathcal {E}_3^p \cong \frac{\mathcal {F}^p\cap Ker(\mathcal {D}_3)}{\mathcal {F}^p\cap Im(\mathcal {D}_3)+\mathcal {F}^{p+1}\cap Ker(\mathcal {D}_3)} \cong \frac{\frac{\mathcal {F}^p\cap Ker(\mathcal {D}_3)}{\mathcal {F}^p\cap Im(\mathcal {D}_3)}}{\frac{\mathcal {F}^p\cap Im(\mathcal {D}_3)+\mathcal {F}^{p+1}\cap Ker(\mathcal {D}_3)}{\mathcal {F}^p\cap Im(\mathcal {D}_3)}} \cong \frac{\frac{\mathcal {F}^p\cap Ker(\mathcal {D}_3)}{\mathcal {F}^p\cap Im(\mathcal {D}_3)}}{\frac{\mathcal {F}^{p+1}\cap Ker(\mathcal {D}_3)}{\mathcal {F}^{p+1}\cap Im(\mathcal {D}_3)}}.$$
That is:
$$\mathcal {E}_3^p \cong \frac{H(\mathcal {F}^p, {\mathcal { D}_3}_{\mid \mathcal {F}^{p}})}{H(\mathcal {F}^{p+1}, {\mathcal { D}_3}_{\mid \mathcal {F}^{p+1}})}.$$
Hence, 
$\mathcal {E}^*_3\cong \oplus _{p\geq 0}\mathcal {E}xt^p_{(\Lambda V,d_3)}(\mathbb{Q} , (\Lambda V,d_3))$ is an isomorphism of graded algebras.

 The same arguments  used for $k=3$ can be applied term by term to conclude that for any $k\geq 3$, the first term of the second spectral sequence \ref{4} is:
$$\mathcal {E}^*_k\cong \mathcal {E}xt^*_{(\Lambda V,d_k)}(\mathbb{Q} , (\Lambda V,d_k)).$$

\section{\bf Proof of the main results}
In this paragraph we give  proofs of our main results  (Theorem $1.0.2$, Theorem $1.0.6$ and Theorem $1.0.8$).

\centerline{\bf Proof of theorem $1.0.2$}
\begin{pf}
We proceed in three steps:
\begin{enumerate}[Step 1.]
  \item 
   As in the preliminary, denote $A=C_*(\Omega X, d)$,  $B(A)$ its reduced bar construction and   $\Omega (A) = T(s^{-1}\overline{A^{\vee}})$ the dual of $B(A)$. 
This part of the proof reposes on    the  following isomorphisms (cf. proof of Theorem 2.1 in \cite{FHT88}):
$$\begin{array}{ccc}
(Hom_A(A\otimes B(A), A), d) & \stackrel{\cong \varphi _{A}}{\longrightarrow} & (End_{A\otimes B(A)}(A\otimes B(A)), [d, \; ])\\
 & \stackrel{\cong \vee }{\longrightarrow} &  (End_{\Omega (A)\otimes A^{\vee}}(\Omega (A)\otimes A^{\vee}), [d^{\vee }, \; ])\\
  &  \stackrel{\cong \varphi _{\Omega (A)} }{\longleftarrow} & (Hom_{\Omega(A)}(\Omega (A)\otimes A^{\vee}, \Omega (A)), d). 
\end{array}
$$ 
(Recall that $\varphi _{A}(f)= (f\otimes id_{B(A)})\circ (id_A\otimes \Delta _{B(A)})$ and the same is for $\varphi _{\Omega (A)}$).

We  denote  $B(A)^{\geq q}=  \bigoplus _{k\geq q} T^k(s\bar A)$ and $\Omega (A)^{\geq q} = \bigoplus _{k\geq q}T^k(s^{-1}\overline{A^{\vee}})$.

Consider  on $(Hom_{\Omega(A)}(\Omega (A)\otimes A^{\vee}, \Omega (A)), \mathcal{D})$ the filtration given by:
\begin{eqnarray} \label{11}
\mathcal{I\!\!F'}^q=\{ f \mid f(\Omega (A)\otimes A^{\vee}) \subseteq \Omega (A)^{\geq q} \}.
\end{eqnarray} 
 We have successively for any  $f\in \mathcal{I\!\!F}^q$:
 $$\varphi _{A} (f)(A\otimes B(A))\subseteq A\otimes B(A)^{\geq q} \;\;\;\; and $$
$$(\vee \circ \varphi _{A} ) (f))(\Omega (A)\otimes A^{\vee})=(\varphi _{A} (f))^{\vee}(\Omega (A)\otimes A^{\vee})\subseteq \Omega (A)^{\geq q}\otimes A^{\vee}.$$

Put $h= (\varphi _{A} (f))^{\vee}$. It is clear that 
$$h(\Omega (A)\otimes A^{\vee})\subseteq \Omega (A)^{\geq q}\otimes A^{\vee} \Longleftrightarrow \varphi _{\Omega (A)}^{-1}(h)(\Omega (A)\otimes A^{\vee})\subseteq \Omega (A)^{\geq q}.$$

It results that the composite isomorphism of graded complexes\\
$(Hom_A(A\otimes B(A), A), d)  \stackrel{\cong }{\longrightarrow} (Hom_{\Omega(A)}(\Omega (A)\otimes A^{\vee}, \Omega (A), d)$
 is a filtration preserving.
 
\item  

 Consider the  Adams-Hilton model $(U\mathbb{L}_W, d)$ of $X$ (\cite{FHT01}, Rem. \S 26 (b)), that is $U\mathbb{L}_W$ is quasi-isomorphic with $A$.  It follows an induced quasi-isomorphism $(\Omega (A), d)\stackrel{\simeq}{\longrightarrow}  (\Omega (U\mathbb{L}_W),d)$ preserving word length filtrations. Moreover, the dual of the quasi-isomorphism $C_*(\mathbb{L}_W)\stackrel{\simeq}{\longrightarrow} B(U\mathbb{L}_W)$  (\cite{FHT01}, Prop. 22.7) is a preserving  filtration quasi-isomorphism $ (\Omega (U\mathbb{L}_W), d) \stackrel{\simeq}{\longrightarrow} (C^*(\mathbb{L}_W), d)$. Recall that $(C^*(\mathbb{L}_W), d)$ is a cochain model of $A_{PL}(X)$. Since  $X$ is simply connected and of finite type, $\mathbb{L}_W$ is a connected  Lie algebra of finite type and then $(C^*(\mathbb{L}_W), d)$ is itself a Sullivan algebra. Hence the quasi-isomorphism $(C^*(\mathbb{L}_W), d)  \stackrel{\simeq}{\longrightarrow} A_{PL}(X)$ lefts to a quasi-isomorphism $(C^*(\mathbb{L}_W), d)  \stackrel{\simeq}{\longrightarrow} (\Lambda V,d)$ (where $(\Lambda V,d)$ designate  any minimal model of $X$).   We know that  any morphism of Sullivan algebras is automatically a filtration preserving, so  the last one is. We  conclude that there is a filtration preserving quasi-isomorphism $\varphi : (\Omega (A), d)\stackrel{\simeq}{\longrightarrow} (\Lambda V,d)$.  

\item  
Consider on  $(\Lambda V \otimes \Gamma (sV), d)$  the structure of  $(\Omega (A),d)$-module induced by $\varphi$.  The following diagram:
  $$\begin{array}{ccc}
  & & (\Lambda V \otimes \Gamma (sV), d)\\
  & & \downarrow \simeq \\
  (\Omega (A)\otimes A^{\vee}, d) & \stackrel{\simeq}{\longrightarrow} &  \mathbb{Q}
  \end{array}
  $$
  is completed (cf.  \cite{FHT01}, Prop. 6.4) by a quasi-isomorphism \\ $\Phi : (\Omega (A)\otimes A^{\vee}, d)  \stackrel{\simeq}{\longrightarrow} (\Lambda V \otimes \Gamma (sV), d)$ between $(\Omega (A),d)$-modules  which are semifree resolutions of $\mathbb{Q}$. The filtration of $\Omega (A)\otimes A^{\vee}$ by the  $\Omega (A)$-submodules  $\Omega (A)\otimes A^{\vee}_{\leq i}$ exhibits $\Omega (A)\otimes A^{\vee}$ as an  $\Omega (A)$-semifree  resolution of $\mathbb{Q}$.

Consider now on $ Hom_{\Omega (A)}(\Omega (A)\otimes A^{\vee} , \Lambda V), \mathcal{D})$ the filtration 
\begin{eqnarray} \label{12}
\mathcal{I\!\!F''}^q=\{ f \mid f(\Omega (A)\otimes A^{\vee}) \subseteq (\Lambda V)^{\geq q} \}.
\end{eqnarray}
  The dga-morphism  $$
  \Psi: Hom_{\Omega (A)}(\Omega (A)\otimes A^{\vee} , \Omega (A), \mathcal{D})  \rightarrow  Hom_{\Omega (A)}(\Omega (A)\otimes A^{\vee} , \Lambda V), \mathcal{D})
 $$ (resp. 
           $$
  \Psi':  Hom_{\Lambda V}(\Lambda V \otimes \Gamma (sV), \Lambda V), \mathcal{D})  \rightarrow Hom_{\Omega (A)}(\Omega (A)\otimes A^{\vee} , \Lambda V), \mathcal{D})
  $$
 defined by   $\Psi (f) =  \varphi \circ f$ (resp.  $\Psi ' (g) =  g \circ \Phi$) clearly    preserves filtrations \ref{11} and \ref{12} (resp. \ref{9} and \ref{12}). It follows the  induced morphisms of respective spectral sequences.
 
For the remainder we denote $(\Omega (A), d) = (T(W), d)$. By  Proposition 3.6 in \cite{HL88},  $\varphi : (\Omega (A), d)\stackrel{\simeq}{\longrightarrow} (\Lambda V,d)$ induces  the quasi-isomorphism $$E_2(\varphi) : (T(W), d_2)\stackrel{\simeq}{\longrightarrow} (\Lambda V,d_2).$$ Whence (\cite{FHT88}, Remark 1.3 (1)) the second terms of spectral sequences induced by filtrations  \ref{11}, \ref{12} and \ref{9} 
  are   isomorphic.
\end{enumerate}

Combining all this steps, we deduce that   the two spectral sequences \ref{4} and \ref{2} are  isomorphic. 

\end{pf}

\centerline{\bf Proof of theorem $1.0.6$}

Recall  first that the chain map    $ev:  (Hom_{\Lambda V}(\Lambda V \otimes \Gamma (sV), \Lambda V), \mathcal{D}) \longrightarrow (\Lambda V,d)$ which induces  $ev_{(\Lambda V,d)}$  is compatible with filtration  \ref{8} and \ref{9}. In fact  it is a morphism of filtered cochain complexes.
  
Before beginning the proof, we outline in the following remark some proprieties of the invariant $\textsc{r}(\Lambda V,d)$ which are similar to those of $e_{0}(\Lambda V,d)$.
\begin{rem} The filtration inducing the spectral sequence \ref{4} is \\ $\mathcal{F}^p=\{f\in Hom_{\Lambda V}(\Lambda V\otimes \Gamma(sV),\Lambda V)\; \mid \; f(\Gamma (sV))\subseteq \Lambda ^{\geq p}V \}.$
We deduce immediately the following:
\begin{enumerate}
\item If $(\Lambda V, d)$ is a Sullivan minimal model, then  $\textsc{r}(\Lambda V, d)$  is the largest integer $p$ such that some nontrivial  class in  $\mathcal{E}xt_{(\Lambda V, d)}^{*}(\mathbb Q, (\Lambda V, d))$ is represented by a cocycle in $\mathcal{F}^{ p}$. Equivalently it is the least integer $p$ such that the projection $ \mathcal{A} \rightarrow \mathcal{A}/\mathcal{F}^{> p}$ induces an injection in cohomology, where $\mathcal{A} = Hom_{(\Lambda V,d)}[(\Lambda V\otimes \Gamma (sV), D), (\Lambda V,d)]$.
\item Suppose $dim(V)<\infty $, so that $(\Lambda V,d)$ is a Gorenstein algebra (\cite{LM01}), ie $\mathcal{E}xt_{(\Lambda V, d)}^{*}(\mathbb Q, (\Lambda V, d))$ is one dimensional (\cite{Mur94}). Denote by $\Omega $ its generator. The projection  $ \pi : \mathcal{A} \rightarrow \mathcal{A}/\mathcal{F}^{> p}$  is then an injection in cohomology if and only if $H(\pi ) (\Omega )\not = 0$. Therefore $$\textsc{r}(\Lambda V,d)= sup\{ p \mid \Omega \; \hbox{can be represented by a cocycle in}\;  \mathcal{F}^{ p}\}.$$
\end{enumerate}
\end{rem}
\begin{pf} (of Theorem 1.0.6.):

We denote, as in the  introduction, $(\Lambda V,d)$ the Sullivan minimal model of $X$. Since $(\Lambda V,d)$ is elliptic, $e_0(\Lambda V,d)= cat_{\mathbb{Q}} (X)$ (\cite{FHL}), so it suffices to prove that $e_0(\Lambda V,d) \geq \textsc{r}(\Lambda V,d) + (k-2)$. Since $dim(V)<\infty $, (\cite{LM01}) 
   $(\Lambda V,d_k)$ and $(\Lambda V,d)$ are   Gorenstein graded algebras.  So  there exists a unique $(p,q)\in \mathbb{N}\times \mathbb{N}$  such that $ \mathcal{E}xt^*_{(\Lambda V,d_k)}(\mathbb{Q} , (\Lambda V,d_k))=\mathcal {E}xt^{p,q}_{(\Lambda V,d_k)}(\mathbb{Q} , (\Lambda V,d_k))$, with a unique generator. The $\mathcal{E}_{\infty} $ term of   \ref{4} is then a one-dimensional $\mathbb{Q}$-vector space concentrated in the bidegree $(p,q)$. It results that  $\textsc{r}(\Lambda V,d)=p$.

Also, the convergence of \ref{4} implies that $ \mathcal {E}_{\infty}^{p,q} \cong  \mathcal {E}xt^{p+q}_{(\Lambda V,d)}(\mathbb{Q} , (\Lambda V,d))$, hence $\mathcal {E}xt^{*}_{(\Lambda V,d)}(\mathbb{Q} , (\Lambda V,d))$ is concentrated in degree $p+q$ and then  the formal dimension of $(\Lambda V, d)$ is exactly $N = p+q$ (in fact,  by ellipticity of $(\Lambda V,d)$, we have $0\not = ev_{(\Lambda V,d)}  : \mathcal {E}xt^{p+q}_{(\Lambda V,d)}(\mathbb{Q} , (\Lambda V,d))  \stackrel{\cong}{\rightarrow}   H^{p+q}(\Lambda V,d).$ Therefore  $N = p+q $ equals to degree of the fundamental class of $(\Lambda V,d )$).

 To finish this proof, we need the following lemma:
\begin{lem}
 If $[h]$ is  any generator of $\mathcal {E}xt^{p,q}_{(\Lambda V,d_k)}(\mathbb{Q} , (\Lambda V,d_k))$, then $ h(1)\in \Lambda ^{\geq  p'}V$, with  $p' = p + (k-2)\geq p$.
\end{lem}   
\begin{pf}
 $[h]$ being a generator of $\mathcal {E}xt^{p,q}_{(\Lambda V,d_k)}(\mathbb{Q} , (\Lambda V,d_k))$ implies that $h(\Gamma (sV))\subseteq \Lambda ^{\geq p}V$. Now   let $x_i$ an generator  of $V$ with  the smallest  degree.  Necessarily $d_k(x_i) = 0$, hence $D_k(sx_i) = x_i$ which leads  $ d_k(h(sx_i))= \pm  h(D_k(sx_i)) = h(x_i) = x_ih(1)$. Since $d_k(h(sx_i))\in \Lambda ^{\geq p + (k-1)}V$, we have  $h(1)\in \Lambda ^{\geq p + (k-2)}V$. Finally $h(1)\in \Lambda ^{\geq  p'}V$, with $p' = p +(k-2) \geq p$.
\end{pf} 
As a consequence, there exists $(p',q') \in \mathbb{N}^2$ such that $p'\geq p$, $p' +q' = p + q = N$ and  $ev_ {(\Lambda V,d_k)} : \mathcal {E}xt^{p,q}_{(\Lambda V,d_k)}(\mathbb{Q} , (\Lambda V,d_k)) {\longrightarrow}  H^{p', q'}(\Lambda V,d_k)$. This map induces $ev_{\infty} : \mathcal {E}^{p,q}_{\infty} \rightarrow E_{\infty}^{p',q'} .$

Once again,  $(\Lambda V,d)\;  \hbox{is elliptic}\;  \Leftrightarrow ev_{(\Lambda V,d)}\not = 0$ (cf. \cite{Mur94}), thus using compatibility of filtrations with $ev_{(\Lambda V,d)}$ and the  convergence of spectral sequences, we deduce that  $ev_{\infty }\not = 0$ and then $E_{\infty}^{p',q'}\not = 0$.  This  is illustrated by the following  commutative diagram:
 $$\begin{array}{ccc}
   \mathcal {E}^{p,q}_{\infty} &  \stackrel{\cong}{\rightarrow} &  \mathcal {E}xt^{p+q}_{(\Lambda V,d)}(\mathbb{Q} , (\Lambda V,d))\\
   ev_{\infty } \downarrow &  &  ev_{(\Lambda V,d)}\downarrow \\
     E_{\infty}^{p',q'} & \stackrel{\cong}{\rightarrow} & H^{p+q}(\Lambda V,d)
  \end{array}
$$

 To conclude,  by remark $2.3.1. (2)$, we see  that  the Milnor-Moore spectral sequence and its generalization \ref{3} define both the Tommer invariant $e_0(\Lambda V,d) $.  Hence $e_0(\Lambda V,d) \geq  p' = \textsc{r}(\Lambda V,d) + (k-2)$.

 \end{pf}

 \centerline{\bf Proof of theorem $1.0.8$}

 \begin{pf}
As it is noted in the introduction, it suffice to calculate the invariant $\textsc{r}(\Lambda V,d)$ for a minimal model whose $dim(V)<\infty$ and the differential being pure and homogeneous of a certain degree $k$. 

Since $dim(V)< \infty$,  $(\Lambda V,d)$ is a Gorenstein algebra, that is \\ $dim_{\mathbb{Q}}(\mathcal {E}xt^{*}_{(\Lambda V,d)}(\mathbb{Q} , (\Lambda V,d)))= 1$. Also,  the differential being  pure,  there exists (\cite{Mur93}) 
$h : (\Lambda V\otimes \Gamma sV , D)\rightarrow (\Lambda V, d)$,  a $(\Lambda V,d)$-morphism s.t. $[h(1)]=ev_{(\Lambda V,d)} (h)$ is the top cohomology class of $(\Lambda V,d)$ (\cite{LM01}).

Denote by $\omega$ the cycle   representing this top cohomology class and let $x_1,  \ldots , x_n$ and $y_1, \ldots , y_m$  the generators of $V^{enen}$ and $V^{odd}$ respectively.
First, note that  $\omega $ has word length equal to $p = m+n(k-2)$ (\cite{LM01}). Second,  $\mathcal {E}xt^{*}_{(\Lambda V,d)}(\mathbb{Q} , (\Lambda V,d)) = Im(ev_{(\Lambda V,d)})$   (because  they are both  one dimensional). So any element  \\ $[\varphi] \in Im(ev_{(\Lambda V,d)})$ is such that
$\varphi (1)=\omega + d(\omega ')$.  

In the sequel, we will determine $\varphi (\Gamma (sV)$ for an arbitrary an arbitrary $\varphi \in \Gamma (sV)$. Remark first that $\varphi (x_i)=x_i\varphi (1) $ and $\varphi (y_j)=y_j\varphi (1)$;
$1\leq i \leq n$ and $1\leq j \leq m$.

To finish, we will discus  tow cases:
\begin{enumerate}[Step 1.]
\item  Assume that $ d(\omega ') \in \Lambda ^{\geq p}V$.
As $d$ is pure, we have $D(sx_i) = x_i$ and $D(sy_j) = y_j + s(dy_j)$ hence 
$d(\varphi (sx_i)) = \varphi (D(sx_i)) = \varphi (x_i) = x_i(\omega + d(\omega ')) \in \Lambda ^{\geq p+1}V$.    This implies that $\varphi (sx_i)\in \Lambda ^{\geq (p+1)-(k-1)}V=\Lambda ^{\geq p-k+2}V $. 
Also, $d(\varphi (sy_j)) = \varphi (D(sy_j)) = \varphi (y_j+s(dy_j)) = y_j(\omega + d(\omega '))+\varphi (s(dy_j))$. 
 Moreover $$dy_j = \sum _{j_1, \ldots ,j_k}x_{j_1}x_{j_2}\ldots x_{j_k} \in \Lambda ^{k}V \;\;\;\;  \hbox{with} {j_1} < {j_2} < \ldots <  {j_k}$$
 so that  $$s(dy_j))=\frac{1}{k}\sum _{j_1, \ldots ,j_k}\sum _{l=1}^{l=k}x_{j_1}x_{j_2}\ldots x_{j_{l-1}}(sx_{j_l})x_{j_{l+1}}\ldots x_{j_k}.$$ 
Therefore, $$\varphi (s(dy_j))= \frac{1}{k}\sum _{j_1, \ldots ,j_k}\sum _{l=1}^{l=k}x_{j_1}x_{j_2}\ldots x_{j_{l-1}}x_{j_{l+1}}\ldots x_{j_k}\varphi (sx_{j_l}).$$
We deduce  that $d(\varphi (sy_j))\in \Lambda ^{\geq p+1}V$ from which $\varphi (sy_j)\in \Lambda^{\geq p-k+2}V$. Thereafter, using the algebraic structure of $\Lambda V \otimes \Gamma (sV)$ and minimality, we deduce by induction that any product in $ \Gamma (sV)$ is sent by $\varphi $ in $\Lambda ^{\geq p-k+2} V$. 
Consequently, for such $\varphi $, $\varphi (\Gamma (sV))\subseteq \Lambda ^{\geq p-k+2}V$.
\item  Suppose that $ d(\omega ') \in \Lambda ^{\geq q}V$ with $ q < p $, the same argument as in the first step gives   
$\varphi (\Gamma (sV))\subseteq \Lambda ^{\geq q-k+2}$. 
\end{enumerate}

We conclude then that: $\textsc{r}(\Lambda V,d)=p-k+2 =m+(n-1)(k-2)$.
\end{pf}

\section{Final remark}
\begin{enumerate}
\item  Denote  $p = \textsc{r}(\Lambda V, d)$. By Remark 2.3.1. (2) and the proof of theorem 1.0.6. we state the following:    $e_0(\Lambda V, d) = \textsc{r}(\Lambda V, d) + (k-2)$  if and only if there exists a  generator $[h]$ of $\mathcal {E}_{\infty}^{p,q}  = \mathcal{E}xt_{(\Lambda V, d_k)}^{p,q}(\mathbb Q, (\Lambda V,  d_k))$  whose  image $[h']$ by the isomorphism $ \mathcal {E}_{\infty}^{p,q} \stackrel{\cong}{\rightarrow}   \mathcal {E}xt^{p+q}_{(\Lambda V,d)}(\mathbb{Q} , (\Lambda V,d))$  gives a  cocycle $h'(1)$ that realises $e_0(\Lambda V, d)$.

  Regarding to Proposition 3. in \cite{LM02} and the previous  theorem, we see that if $d$ is  pure and non homogeneous,  the equality  can't hold unless $(\Lambda V, d_k)$ is elliptic.

\item With the same notations as in  the last proof, we remark that for any $[\varphi]$ in $\mathcal {E}xt^{*}_{(\Lambda V,d)}(\mathbb{Q} , (\Lambda V,d))$ (the differential being pure and homogeneous of degree $k$) $\varphi (1)\in \Lambda ^{p}V$ while $\varphi (\Gamma ^+(sV))\subseteq \Lambda ^{\geq q -k + 2}V$. Now for a Sullivan minimal algebra $(\Lambda V, d)$, with $d: V\rightarrow \Lambda ^{\geq k}V$ and  $dim(V)<\infty $, using the convergence of \ref{7} for the induced  model $(\Lambda V, d_k)$ and remark $2.1.1$, we obtain a generator $\varrho $ for $\mathcal {E}xt^{*}_{(\Lambda V,d_k)}(\mathbb{Q} , (\Lambda V,d_k))$ such that $\varrho (1)\in \Lambda ^{\geq p}V$. Also, using the convergence of \ref{4}, we obtain a  generator $[\vartheta ]$ of   $\mathcal {E}xt^{*}_{(\Lambda V,d)}(\mathbb{Q} , (\Lambda V,d))$, such that $\vartheta (1)\in \Lambda ^{\geq p}V$. As a consequence, if in addition, we suppose  $(\Lambda V, d)$ elliptic,  we deduce that $e_0(\Lambda V,d)\geq p = m+n(k-2)$, so we  recover the result of S. Ghorbal and B. Jessup (\cite{GJ01}, cor.3 and \cite{Lup02}, Rem 2.4). 

Furthermore, the determination of $\varrho (\Gamma ^+(sV))$ and $\vartheta (\Gamma ^+(sV))$  depends on the calculation of the images $\varrho (D(sx_i))$ and $\varrho (D(sy_j))$ (resp $\vartheta (D(sx_i))$ and $\vartheta (D(sy_j)$) which seems more difficult. Obviously, in doing such  calculations, one must use spectral sequences \ref{3} and \ref{6}. The following question is natural.

{\it Question}: For a  Sullivan minimal  algebra $(\Lambda V, d)$, with $d: V\rightarrow \Lambda ^{\geq k}V$ and  $dim(V)<\infty $, is $\textsc{r}(\Lambda V,d)\geq  m+(n-1)(k-2)?$  

\item 
Let $\mathbb K$  a field of $char(\mathbb K)= p >2$ and  $X$ an $r$-connected $\mathbb K$-elliptic finite CW-complex in the rang of Anick. Let
$(\Lambda V, d)$ denote its Sullivan minimal model. By the same argument as in the rational case, we have
 $$cat(X) \geq e_{\mathbb K}(X)\geq \textsc{r}(\Lambda V,d) + (k-2).$$
 \end{enumerate}
 \section{Acknowledgements} I am indebted to J. C. Thomas  for  very useful conversations which enabled me to improve my results significantly. I am also grateful to the reviewers who allowed me with their comments to improve the drafting of this work.
 \section{References}


\begin{thebibliography}{9}


\bibitem{Ani89}
{D. Anick},
\emph{Hopf algebras up to {Homotopy} },
{J. Amer. Math. Soc.},
vol. {2},
({1989}),
 {417-453},


\bibitem{Bis99}
{L. Bisiaux},
 \emph{{Depth} and {Toomer's} Invariant},
 {{Topology} and its {Applications}},
 vol. {97},
 ({1999}),
 {207-215},

   
\bibitem{FH82}
{Y. F{\'e}lix and S. Halperin},
\emph{Rational {L}{S} category and its {Applications}},
{Tran. Amer. Math. Soc.},
vol. {273},
({1982}),
{1-38},

\bibitem{FHL}
 {Y. F{\'e}lix, S. Halperin and J. M. Lemaire},
 \emph{ The {Rational} {L}{S}-category of {Products} and
  {Poincar{\'e}} {Duality} {Camplexes}},
 {Topology},
 vol. {37},
 ({4}),
 ({1998}),
 {749-756},
 
   
   \bibitem{FHLT89}
 {Y. F{\'e}lix,  S. Halperin   and J. M. Lemaire and J. C. Thomas},
 \emph{mod p loop space homology},
 {Invent. math.},
 vol. {95},
 ({1989}),
 {247-262},

\bibitem{FHT01}
 {Y.  F{\'e}lix, S. Halperin  and J. C. Thomas},
 \emph{Rational {Homotopy} {Theory}},
 {G. T. M.},
 ({205}),
 {Springer-Verlag},
 {New York},
 ({2001}),





\bibitem{FHT88}
 {Y.  F{\'e}lix,  S Halperin  and J. C. Thomas},
 \emph{Gorenstein {Spaces}},
 {Advences in Math.},
 vol. {71},
 ({1988}),
 {92-112},

\bibitem{GJ01}
 {S. Ghorbal and B. Jessup},
 \emph{{Estimating} the rational {LS}-category of elliptic spaces},
 {Proc. Amer. Math. Soc.},
 vol. {129},
 ({2001}),
 {1833-1842},
 
\bibitem{Hal92}
 {S. Halperin},
 \emph{Universel enveloping algebras and loop {Space} {Homology}},
 {J. Pure and App. Algebra},
 vol. {83},
 ({1992}),
 {237-282},
 
 \bibitem{Hal77}
 {S. Halperin},
 \emph{Finiteness in the {Minimal} {Models} of {Sullivan} },
 {Trans. Amer. Math. Soc.},
 volume={230},
 ({1977}),
 {173-199},

\bibitem{HL88}
 {S. Halperin and J. M. Lemaire},
 \emph{Notion of {Category} in {Differential} {Algebra}},
 {Algebraic Topology-Rational Homotopy, Lecture Notes in Mathematics},
 vol. {1318},
 ({1988}),
 {138-154},
  
 \bibitem{Jam78}
 {I. M. James},
 \emph{On  category in the sens of {Lusternik}-{Schnirelmann }},
 {Topology},
 vol. {17},
 ({1978}),
 {331-348},
 
 \bibitem{LM01}
{L. Luchuga and A. Murillo},
\emph{The {Fondamental} class of a rational space, 
the graph colloring problem and other classical decision problems },
{Bull. Belg. Math. Soc.},
vol. {8},
({2001}),
{451-467},


\bibitem{LM02}
{L. Luchuga and A. Murillo},
\emph{A {Formula} for the {Rational} {L}{S}-category of {Certain} {Spaces}},
{Ann. Inst. Fourier, Grenoble},
vol. {52},
({5}),
({2002}),
{1585-1590},



 \bibitem{Lup02}
 {G. Lupton},
 \emph{The {Rational} {Toomer} { Invariant } and {Certain} { Elliptic } { Spaces}},
 {Lusternik-Schnirelmann and Related topics (Mt, Holyoke, 2001), Cont. Math. AMS},
 vol. {316},
 ({2002}),
 {135-146},


\bibitem{LS34}
   L. Lusternik and L. Schnirelmann,
   \emph{{M\'ethodes} {Topologiques} dans les { Probl\'emes} {Variationnels}},
  {Hermann, Paris},
({1934}),

\bibitem{Moo59}
 {J. C. Moore},
 \emph{Alg{\'e}bre {Homologique} et {Homologie} des {Espaces} {Classifiants}},
 {S{\'e}minaire Cartan},
 {1959/1960},


\bibitem{Mur93}
 {A. Murillo},
 \emph{The {top} cohomology class   of some certain spaces},
 {Journal of Pure and Applied Algebra},
 vol. {84},
 ({1993}),
 {209-214},


\bibitem{Mur94}
 {A. Murillo},
 \emph{The evaluation map of some {Gorenstein} {Spaces}},
 {Journal of Pure and Applied Algebra},
 vol. {91},
 ({1994}),
 {209-218},






\bibitem{Pal66}
 {R. S. Palais},
 \emph{{Lusternik}-{Schnirelmann} theory on {Banach} { Manifolds}},
 {Topology},
 vol. {5},
 ({1966}),
 {115-132},

\bibitem{Ram99}
 {Y. Rami},
 \emph{Dimension {Globale} et {Classe} {Fondamentale} d'un {Espace}},
 {Ann. Inst. Fourier, Grenoble},
 vol. {49, I},
 ({1999}),
 {333-350},
  
\bibitem{Sul78}
 {D. Sullivan},
 \emph{Infinitisimal {Computations} in {topology}},
 { Inst. Hautes Etudes Sci. Publ. Math.},
 ({1978}),
 {269-331},




\bibitem{Too}
{G. H. Toomer},
\emph{Lusternik-Schnirelmann category and the Moore spectral sequence},
 {Math. Zeit.},
Vol. {183},
 (1974),
{123-143},


\end{thebibliography}
\end{document}